\newtheorem{lem}{Lemma}[section]
\newtheorem{cor}[lem]{Corollary}
\newtheorem{thm}[lem]{Theorem}
\newtheorem{assumption}[lem]{Assumption}
\theoremstyle{definition}
\theoremstyle{remark}
\numberwithin{equation}{section}
\newcommand{\ep}{\varepsilon}
\newcommand{\ue}{u^\ep}
\newcommand{\eun}{\displaystyle{\frac{1}{\ep}}}
\newcommand{\R}{\mathbb{R}}
\newcommand{\vsp}{\vspace{8pt}}
\newcommand{\di}{\displaystyle}
\newcommand{\Pe}{(P^{\;\!\ep})}
\newcommand{\Pz}{(P^{\;\!*})}
\newcommand{\pt}{\ep|\ln\ep|m_1e^{m_2t}}
\title{Sharp interface limit of the Fisher-KPP equation}
\author{ }
\date{}
\begin{document}

\maketitle \vspace{-20 mm}

\begin{center}

{\large\bf Matthieu Alfaro }\\[1ex]
I3M, Universit\'e de Montpellier 2,\\
CC051, Place Eug\`ene Bataillon, 34095 Montpellier Cedex 5, France,\\[2ex]

{\large\bf Arnaud Ducrot }\\[1ex]
UMR CNRS 5251 IMB and INRIA Sud-Ouest ANUBIS, \\
Universit\'e de Bordeaux, 3, Place de la Victoire, 33000 Bordeaux France. \\[2ex]

\end{center}

\vspace{15pt}


\begin{abstract}

We investigate the singular limit, as $\ep \to 0$, of the Fisher
equation $\partial _t u=\ep \Delta u + \ep ^{-1}u(1-u)$ in the
whole space. We consider initial data with compact support plus,
possibly, perturbations very small as $\Vert x \Vert \to \infty$.
By proving both generation and motion of interface properties, we
show that the sharp interface limit moves by a constant speed,
which is the minimal speed of some related one-dimensional
travelling waves. We  obtain an estimate of the thickness of the
transition layers. We also exhibit initial data \lq\lq not so
small" at infinity
which do not allow the interface phenomena.\\

\noindent{\underline{Key Words:}} population dynamics, Fisher
equation, singular perturbation, generation of interface, motion
of interface. \footnote{AMS Subject Classifications: 35K57, 35B25,
35R35, 35B50, 92D25.}
\end{abstract}

\section{Introduction}\label{s:intro}

Reaction diffusion equations with logistic nonlinearity were
introduced in the pioneer works of Fisher \cite{Fish} or
Kolmogorov, Petrovsky and Piskunov \cite{Kol-Pet-Pis}. The
equations read as
\begin{equation}\label{KPP}
\frac{\partial u}{\partial t}(t,x)=\Delta
u(t,x)+u(t,x)(1-u(t,x))\,,\;\;t>0\,,\;\;x\in\R^N\,,
\end{equation}
supplemented together with some suitable initial conditions. This
kind of equation was widely used in the literature to model
phenomena arising in population genetics, \cite{Fish,AW}, or in
biological invasions, \cite{Shi-Kaw, PM, MPV} and the references
therein. We also refer to \cite{BHL} for some extensions to
biological invasion in nonhomogeneous media with logistic
dynamics. The main property of equation \eqref{KPP} is to admit
(biologically relevant) travelling wave solutions with some
semi-infinite interval of admissible wave speed. The aim of this
work is to focus on the ability of equation \eqref{KPP} to
generate some interfaces and to propagate them. Such a property is
strongly related to these wave solutions. In order to observe such
a property, we shall rescale equation \eqref{KPP} by putting
\begin{equation*}
u^\ep(t,x)=u\left(\frac{t}{\ep},\frac{x}{\ep}\right)\,.
\end{equation*}
Therefore, we focus on the singular limit problem
\[
 \Pe \quad\begin{cases}
 \partial _t u= \ep \Delta u+\eun u(1-u)&\text{in }(0,\infty)\times \R ^N  \vspace{3pt}\\
 u(0,x)=u_{0,\ep}(x) &\text{in }\R ^N\,,
 \end{cases}
\]
as the parameter $\ep>0$, related to the thickness of a diffuse
interfacial layer, tends to zero.

We shall assume the following properties on the initial data.
\begin{assumption}\label{H1}
We assume that $u_{0,\ep}=g+h_\ep$ where
\begin{itemize}
\item [(i)] $g$ is a smooth (at least of the class $C^2$),
nonnegative and compactly supported function. We define $
\Omega_0:={\rm supp}\; g$. \item [(ii)] $0\in \Omega_0$. \item
[(iii)] $h_\ep$ is a smooth and nonnegative function and there
exist $\lambda\geq 1$ and $M>0$ such that, for all $\ep >0$ small
enough,
\begin{equation*}
h_\ep(x)\leq M e^{- \lambda\frac{\|x\|}{\ep}}\,,\;\;\forall
x\in\R^N\,.
\end{equation*}
\end{itemize}
\end{assumption}

\begin{assumption}\label{H2}
We assume that $\Omega_0$ is convex.
\end{assumption}

\begin{assumption}\label{H3}
We assume the existence of $\delta >0$ such that, if $n$ denotes
the Euclidian unit normal vector exterior to the ``initial
interface" $\Gamma _0:=\partial \Omega_0$, then
\begin{equation}\label{pente}
\left| \frac{\partial g}{\partial n}(y)\right|\geq \delta
\quad\text{ for all } y\in \Gamma _0\,.
\end{equation}
\end{assumption}

\vsp \noindent {\bf Heuristics.} In view of Assumption \ref{H1},
it is  standard that, for each $\ep >0$, Problem $\Pe$ has a
unique smooth solution $\ue(t,x)$ on $[0,\infty) \times \R ^N$. As
$\ep \rightarrow 0$, a formal asymptotic analysis shows the
following: in the very early stage, the diffusion term $\ep \Delta
\ue$ is negligible compared with the reaction term $\ep
^{-1}\ue(1-\ue)$ so that, in the rescaled time scale $\tau=t
/\ep$, the equation is well approximated by the ordinary
differential equation $\partial _\tau \ue =\ue(1-\ue)$. Hence the
value of $\ue$ quickly becomes close to either $1$ or $0$ in most
part of $\R^N$, creating a steep interface (transition layer)
between the regions $\{\ue\approx 1\}$ and $\{\ue\approx 0\}$.
Once such an interface develops, the diffusion term becomes large
near the interface, and comes to balance with the reaction term.
As a result, the interface ceases rapid development and starts to
propagate in a much slower time scale.

\vsp \noindent {\bf The limit free boundary problem.} To study
this interfacial behavior, we consider the asymptotic limit of
$\Pe$ as $\ep\rightarrow 0$. Then the limit solution $\tilde u
(t,x)$ will be a step function taking the value $1$ on one side of
the moving interface, and $0$ on the other side. This sharp
interface, which we will denote by $\Gamma ^*_t$, obeys the law of
motion
\[
 \Pz\quad\begin{cases}
 \, V_{n}=c^*
 \quad \text { on } \Gamma ^*_t \vspace{3pt}\\
 \, \Gamma_t^*\big|_{t=0}=\Gamma_0\,,
\end{cases}
\]
where $V_n$ denotes the normal velocity of $\Gamma ^* _t$ in the
exterior direction and $c^*$ the minimal speed of some related
one-dimensional travelling waves (see subsection \ref{ss:tw} for
details).

Since the region enclosed by $\Gamma _0$, namely $\Omega _0$, is
smooth and convex, Problem $\Pz$, possesses a unique smooth
solution on $[0,\infty)$, which we denote by $\Gamma ^*=\bigcup
_{t\geq 0} (\{t\}\times\Gamma^*_t)$. Hereafter, we fix $T>0$ and
work on $(0,T]$.

For each $t\in (0,T]$, we denote by $\Omega ^* _ t$  the region
enclosed by the hypersurface $\Gamma ^* _t$. We define a step
function $\tilde u (t,x)$ by
\begin{equation}\label{u}
\tilde u  (t,x)=\begin{cases}
\, 1 &\text{ in } \Omega ^* _t\\
\, 0 &\text{ in } \R ^N \setminus \overline{\Omega ^* _t}
\end{cases} \quad\text{for } t\in(0,T]\,,
\end{equation}
which represents the asymptotic limit of $\ue$ (or the {\it sharp
interface limit}) as $\ep\to 0$.

\vsp \noindent {\bf Known related results.} The question of the
convergence of Problem $\Pe$ to $\Pz$ has been addressed when the
initial data $u_{0,\ep}$ does not depend on $\ep$ and is compactly
supported : first by Freidlin \cite{Frie} using probabilistic
methods and later by Evans and Souganidis \cite{Ev-Soug} using
Hamilton Jacobi technics (in this framework we also refer to
\cite{Bar-Eva-Sou, Bar-Soug2}). The purpose of the present work is
to provide a new proof of convergence for Problem $\Pe$ by using
specific reaction-diffusion tools such as the comparison
principle. These technics were recently used by Hilhorst {\it et
al.} in \cite{HKLM} to consider the generation and propagation of
interfaces for a degenerated Fisher equation. Degenerated Fisher
equation have some semi-compactly supported travelling wave
solutions, which is essential in the proof given in \cite{HKLM}.
However Equation \eqref{KPP} does not possess solution with such a
property. We adapt these technics to study the singular limit of
$\Pe$. For bistable nonlinearities, we refer to \cite{A-Hil-Mat}
where an optimal estimate of the transition layers is provided for
the Allen-Cahn equation whose singular limit is a motion by mean
curvature.

Let us precise that we improve the convergence of $\Pe$ in two
directions. On the one hand, we extend the set of initial data for
which Problem $\Pe$ has a singular limit, by allowing positive
initial data with a suitable behavior at infinity. Moreover, we
also exhibit initial data \lq\lq not so small" at infinity which
do not allow the interface phenomena. On the other hand we provide
an $\mathcal O (\ep|\ln \ep|)$ estimate of the thickness of the
transition layers of the solutions $\ue$.

\vsp \noindent {\bf Results.} Our main result, Theorem
\ref{th:results}, describes the profile of the solution after a
very short initial period. It asserts that: given a virtually
arbitrary initial data, the solution $\ue$ quickly becomes close
to 1 or 0, except in a small neighborhood of the initial interface
$\Gamma _0$, creating a steep transition layer around $\Gamma _0$
({\it generation of interface}). The time needed to develop such a
transition layer, which we will denote by $t ^\ep$, is of order
$\ep  |\ln\ep|$. The theorem then states that the solution $\ue$
remains close to the step function $\tilde u$ on the time interval
$[t^\ep,T]$ ({\it motion of interface}); in other words, the
motion of the transition layer is well approximated by the limit
interface equation $\Pz$.

\begin{thm}[Generation, motion and thickness of interface]\label{th:results}
Let Assumptions \ref{H1}, \ref{H2} and \ref{H3} be satisfied. Then
there exist positive constants $\alpha$ and $\mathcal C$ such
that, for all $\ep >0$ small enough and for all $t^\ep \leq t \leq
T$, where
$$
t^\ep:=\alpha \ep  |\ln \ep|\,,
$$
we have
\begin{equation}\label{resultat}
\ue(t,x) \in
\begin{cases}
\,[0,1+\ep]\quad&\text{if}\quad
x\in\mathcal N_{\mathcal C\ep|\ln\ep|}(\Gamma ^*_t)\\
\,[1-2\ep,1+\ep]\quad&\text{if}\quad x\in\Omega ^* _
t\setminus\mathcal N_{\mathcal C\ep|\ln\ep|}(\Gamma ^*
_t)\\
\,[0,\ep]\quad&\text{if}\quad x\in (\R^N\setminus \overline{\Omega
^*_t})\setminus\mathcal N_{\mathcal C\ep|\ln\ep|}(\Gamma ^* _t)\,,
\end{cases}
\end{equation}
where $\mathcal N _r(\Gamma ^* _t):=\{x\in \R ^N:\;\;
dist(x,\Gamma ^* _t)<r\}$ denotes the tubular $r$-neighborhood of
$\Gamma ^*_t$.
\end{thm}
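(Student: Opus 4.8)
The plan is to split the time interval $[0,T]$ into the \emph{generation phase} $[0,t^\ep]$ and the \emph{motion phase} $[t^\ep,T]$, and in each phase construct explicit sub- and supersolutions to squeeze $\ue$. For the generation phase I would work directly with the ODE $\partial_\tau w = w(1-w)$ (in the fast time $\tau=t/\ep$) whose solution is explicit: starting from $w_0>0$ it reaches any level $1-\ep$ after a time of order $|\ln\ep|$, and starting from $w_0\le\ep$-scale data it stays small. The idea is to perturb this ODE solution into a supersolution/subsolution of $\Pe$ of the form $w(\tfrac{t}{\ep},\,\cdot\, ) \pm (\text{small correction})$; since the Laplacian of a spatially frozen ODE profile vanishes, the correction only needs to absorb the spatial variation inherited from the initial data $u_{0,\ep}=g+h_\ep$, and Assumption~\ref{H3} (the transversality $|\partial g/\partial n|\ge\delta$ on $\Gamma_0$) is exactly what guarantees that after time $t^\ep=\alpha\ep|\ln\ep|$ the set where $\ue$ has \emph{not} yet been driven close to $0$ or $1$ has shrunk to an $\mathcal O(\ep|\ln\ep|)$ neighborhood of $\Gamma_0$. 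The exponential tail bound $h_\ep(x)\le Me^{-\lambda\|x\|/\ep}$ with $\lambda\ge1$ is used here to ensure the perturbation at infinity does not ignite a spurious interface — this is where the ``not so small at infinity'' counterexample would otherwise bite.

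For the motion phase the plan is the classical modulated-travelling-wave ansatz. Let $U$ be the one-dimensional travelling wave of speed $c^*$ connecting $1$ to $0$ (from subsection~\ref{ss:tw}), and let $d(t,x)$ be the signed distance to $\Gamma^*_t$ (negative inside $\Omega^*_t$), which is smooth in a fixed tubular neighborhood since $\Omega_0$ is smooth and convex and $\Pz$ has a smooth solution on $[0,T]$. I would try sub/supersolutions of the form
\[
 w^\pm(t,x) = U\!\left(\frac{d(t,x)\pm p(t)}{\ep}\right) \mp q(t),
\]
with $p(t),q(t)$ of size $\mathcal O(\ep|\ln\ep|)$ (growing like $\ep|\ln\ep|\,m_1 e^{m_2 t}$, cf.\ the macro \verb|\pt| the paper has set up). Plugging into $\partial_t - \ep\Delta - \ep^{-1}f$ and using $\ep\Delta d = \ep\,\Delta d$ together with the wave equation $U'' + c^* U' + f(U)=0$, the $\mathcal O(\ep^{-1})$ and $\mathcal O(1)$ terms cancel provided $V_n=c^*$ on $\Gamma^*_t$ (which is $\Pz$); the leftover terms — the mean-curvature-type term $\ep\Delta d\, U'$, the $p'(t)U'$ term, and the reaction defect $f(U\mp q)-f(U)\pm q$ — are controlled by choosing $m_1,m_2$ large, using exponential decay of $U'$ and the linearization $f'(0)=f'(1)=-1<0$ away from the layer. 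This construction is valid only in the tubular neighborhood where $d$ is smooth; away from it one uses instead flat sub/supersolutions ($1\pm\mathcal O(\ep)$ inside, $\mathcal O(\ep)$ outside), and the two families are glued.

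The single delicate matching step — and the one I expect to be the main obstacle — is the junction between the two phases at $t=t^\ep$: the generation estimate produces, at time $t^\ep$, a profile that is within $\mathcal O(\ep)$ of $0$ or $1$ outside an $\mathcal O(\ep|\ln\ep|)$-neighborhood of $\Gamma_0$, but this is \emph{not} yet of travelling-wave shape, so it must be shown to lie below the motion-phase supersolution $w^+(t^\ep,\cdot)$ and above the subsolution $w^-(t^\ep,\cdot)$. This forces a careful bookkeeping of constants: $\alpha$ must be taken large enough (depending on $\delta$, on $\lambda$, and on the width of the tubular neighborhood) that the generation layer is thin enough to be swallowed by the shifted wave profiles, and the free parameters $\mathcal C$, $m_1$, $m_2$ must be chosen consistently so that the comparison principle applied on $[t^\ep,T]$ then propagates the enclosure and yields \eqref{resultat}. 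The rest — existence, uniqueness and smoothness of $\Gamma^*$, and the basic properties of $U$ and $c^*$ — I would simply quote.
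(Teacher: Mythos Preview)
Your outline has a genuine gap that would make the motion-phase construction fail. You write that the reaction defect is controlled ``using \ldots\ the linearization $f'(0)=f'(1)=-1<0$ away from the layer'', but for the Fisher--KPP nonlinearity $f(u)=u(1-u)$ one has $f'(0)=+1>0$: the state $u=0$ is \emph{unstable}. This is not a typo-level slip; it is exactly what breaks the Allen--Cahn style ansatz $w^\pm=U\bigl((d\pm p)/\ep\bigr)\mp q$. Concretely, with $w^+=U+q$ the term $-\ep^{-1}f(w^+)$ near $U\approx 0$ equals $-\ep^{-1}q(1-q)<0$, which destroys the supersolution inequality there; with $w^+=U-q$ the analogous failure occurs near $U\approx 1$. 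The exponential decay of $U'$ cannot rescue you because these bad contributions are $\mathcal O(q/\ep)$ while the $p'U'$ term is exponentially small precisely in those regions. In the bistable case both endpoints are stable and the additive correction $\mp q$ is absorbed on both sides; in the monostable case the asymmetry forces a different construction.

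The paper handles this by abandoning the additive correction entirely and using two separate, asymmetric devices. For the super-solution it takes $\widehat K\,U^*\bigl((d^*(0,x)-c^*t)/\ep\bigr)$ with $\widehat K>1$; the quadratic structure of $f$ gives $\ep\,\mathcal L^\ep[\widehat K U^*]=\widehat K(\widehat K-1)(U^*)^2\ge 0$, and the convexity Assumption~\ref{H2} together with the tail bound $h_\ep\le Me^{-\lambda\|x\|/\ep}$, $\lambda\ge 1$, is used to check the initial ordering against half-space waves. For the sub-solution it uses waves of \emph{sub-critical} speed $c<c^*$, which are non-monotone and hit zero at a finite point; truncating them there and rescaling by $(1-\ep)$ yields $u^-_c=(1-\ep)V\bigl((d+\ep|\ln\ep|m_1e^{m_2t})/\ep\bigr)$ with $V=\max(U,0)$, so that $0\le u^-_c\le 1-\ep$ and the unstable tail never appears. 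The result at speed $c^*$ is then recovered by choosing $c=c(\ep)=c^*-\ep|\ln\ep|$ and tracking the $\mathcal O(\ep|\ln\ep|)$ discrepancy. Your generation-phase sketch is broadly right, but the motion phase needs these monostable-specific constructions, not the bistable template.
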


As a immediate consequence of the above Theorem, we collect the
convergence result.

\begin{cor}[Convergence]\label{cor:cv}Let Assumptions \ref{H1}, \ref{H2}
and \ref{H3} be satisfied. Then, as $\ep\to 0$, $\ue$ converges to
$\tilde u $ everywhere in $\bigcup _{0< t\leq T}(\{ t\}\times
\Omega ^* _t)$ and $\bigcup _{0< t \leq T}\left(\{ t\}\times (\R^N
\setminus \overline{\Omega ^* _t})\right)$.
\end{cor}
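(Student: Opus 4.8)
The plan is to deduce the pointwise convergence directly from the three-case estimate \eqref{resultat} of Theorem \ref{th:results}, the only real work being to show that every point of the prescribed space-time sets eventually falls into the ``good'' region $\Omega^*_t \setminus \mathcal N_{\mathcal C\ep|\ln\ep|}(\Gamma^*_t)$ (resp. its exterior analogue) once $\ep$ is small enough. First I would fix a point $(t_0,x_0)$ with $0 < t_0 \le T$ and $x_0 \in \Omega^*_{t_0}$. Since $\Omega^*_{t_0}$ is open, there is $r_0>0$ with $\mathrm{dist}(x_0,\Gamma^*_{t_0}) = r_0 > 0$. For all $\ep$ small enough we have simultaneously $t^\ep = \alpha\ep|\ln\ep| < t_0$ (so that the theorem applies at time $t_0$) and $\mathcal C\ep|\ln\ep| < r_0$ (so that $x_0 \notin \mathcal N_{\mathcal C\ep|\ln\ep|}(\Gamma^*_{t_0})$). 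Hence $x_0 \in \Omega^*_{t_0}\setminus \mathcal N_{\mathcal C\ep|\ln\ep|}(\Gamma^*_{t_0})$ and the second line of \eqref{resultat} gives $\ue(t_0,x_0) \in [1-2\ep, 1+\ep]$, so $\ue(t_0,x_0) \to 1 = \tilde u(t_0,x_0)$ as $\ep\to 0$.

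The exterior case is symmetric: if $x_0 \in \R^N\setminus\overline{\Omega^*_{t_0}}$, then again $\mathrm{dist}(x_0,\Gamma^*_{t_0}) = r_0 > 0$ because $\R^N\setminus\overline{\Omega^*_{t_0}}$ is open, and for $\ep$ small enough both $t^\ep < t_0$ and $\mathcal C\ep|\ln\ep| < r_0$ hold, placing $x_0$ in the set governed by the third line of \eqref{resultat}. Therefore $\ue(t_0,x_0) \in [0,\ep]$, whence $\ue(t_0,x_0) \to 0 = \tilde u(t_0,x_0)$. This establishes convergence at each point of $\bigcup_{0<t\le T}(\{t\}\times\Omega^*_t)$ and of $\bigcup_{0<t\le T}(\{t\}\times(\R^N\setminus\overline{\Omega^*_t}))$, which is exactly the claim.

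I do not anticipate a genuine obstacle here, as the corollary is a bookkeeping consequence of Theorem \ref{th:results}; the only point requiring a line of care is that the threshold on $\ep$ depends on the chosen point $(t_0,x_0)$ through both $t_0$ (via the requirement $t^\ep < t_0$) and $r_0$ (via $\mathcal C\ep|\ln\ep| < r_0$), so the convergence obtained is pointwise rather than uniform near $\Gamma^*$ or near $t = 0$. If one wanted, the same argument upgrades to local uniform convergence on any compact subset of either open space-time region, by taking the minimum of $t_0$ and of $\mathrm{dist}(\cdot,\Gamma^*_\cdot)$ over that compact set; but the stated corollary only asserts everywhere-pointwise convergence, so the short argument above suffices.
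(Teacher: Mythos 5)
Your argument is correct and is precisely the (implicit) one the paper intends: the authors state the corollary as an immediate consequence of Theorem \ref{th:results} without writing out a proof, and your fixing of $(t_0,x_0)$, the observation that eventually $t^\ep<t_0$ and $\mathcal C\ep|\ln\ep|<\mathrm{dist}(x_0,\Gamma^*_{t_0})$, and the appeal to the second and third lines of \eqref{resultat} is exactly the bookkeeping required. Nothing further is needed.
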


Next, it is intuitively clear that the limit problem depends
dramatically on the initial data. In order to underline this fact,
we show that for initial data \lq\lq not so small" at infinity,
the solution $\ue$ tends to 1 everywhere, as $\ep \to 0$.
Therefore, the interface phenomena does not occur in this case.

\begin{assumption}\label{ass6}
We assume that there exist $n>0$, $m>0$, $M>0$ and $C_0>0$ such
that, for all $\ep >0$ small enough,
\begin{equation*}
 \frac{m}{1+\di\frac{\| x\|
^n}{\ep^n}}\leq   u_{0,\ep}(x)\leq M\,,\;\;\forall x\in\R^N\,,
\end{equation*}
\begin{equation*}
\|u_{0,\ep}\|_\infty +\|\nabla u_{0,\ep}\|_\infty+\|\Delta
u_{0,\ep}\|_\infty\leq C_0\,.
\end{equation*}
\end{assumption}

\begin{thm}[There is no interface]\label{THEO6.1}
Let Assumption \ref{ass6} be satisfied. Then
\begin{equation*}
\lim_{\ep\to 0^+} u^\ep(t,x)=1\,,\;\;\forall (t,x)\in
(0,\infty)\times\R^N\,.
\end{equation*}
\end{thm}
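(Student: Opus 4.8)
The plan is to trap $u^\ep$ between two explicit sub- and supersolutions of $\Pe$. The upper bound is the easy part: since the nonlinearity $u(1-u)$ satisfies $u(1-u)\le u$ for $u\ge 0$, and $u^\ep$ stays nonnegative by the comparison principle (the constant $0$ is a subsolution), one checks that $\overline u(t):=\max(M,1)\,e^{t/\ep}$ is not actually needed---better, note that any constant $K\ge\max(M,1)$ is a supersolution of $\Pe$ because $\ep\Delta K+\ep^{-1}K(1-K)\le 0\le\partial_t K$. Hence $0\le u^\ep(t,x)\le\max(M,1)$ for all $t,x$. The real content is the lower bound, which forces $u^\ep\to1$.

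For the lower bound I would construct a subsolution of the form $\underline u^\ep(t,x)=\phi^\ep\big(t,\|x\|\big)$ built from the ODE flow. First, observe that the spatially homogeneous solution $y^\ep(t)$ of $\dot y=\ep^{-1}y(1-y)$, $y(0)=\eta$, converges to $1$ for any $\eta>0$, and for fixed $t>0$ satisfies $y^\ep(t)\to 1$ as $\ep\to0$ since the relevant time scale is $\ep$. The difficulty is that the initial data $u_{0,\ep}(x)\ge m/(1+\|x\|^n/\ep^n)$ is \emph{not} bounded below by a positive constant uniformly in $x$: it decays at spatial infinity. So a purely ODE-based barrier fails far out. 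The fix is to use the diffusion: on the region $\|x\|\le R$ for a large fixed $R$ the initial data is bounded below by $m/(1+R^n/\ep^n)\ge \ep^n m/(2R^n)$ for small $\ep$, an exponentially-in-nothing but polynomially-small-in-$\ep$ constant; feeding this tiny constant into the ODE flow, one still reaches values close to $1$ after a time $O(\ep|\ln\ep|)$, because the logistic ODE amplifies $\delta$ to $1/2$ in time $\sim\ep|\ln\delta|=O(\ep|\ln\ep|)$, which tends to $0$. Thus for any fixed $(t,x)$ with $t>0$, choosing $R>\|x\|$ and comparing $u^\ep$ on the cylinder $[0,t]\times B_R$ with a subsolution that equals the ODE flow in the interior and $0$ on the lateral boundary $\partial B_R$ (using that, after the generation time, the front only needs to propagate the bounded distance $R-\|x\|$ at the finite KPP speed $c^*$, hence in time $O((R-\|x\|)\ep)\to 0$ after rescaling --- here I would invoke the travelling-wave subsolutions underlying Theorem \ref{th:results}), one concludes $u^\ep(t,x)\ge 1-o(1)$.

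More cleanly, I would phrase the lower bound via Theorem \ref{th:results} itself applied to a \emph{modified} compactly supported initial datum: fix $(t_0,x_0)$ with $t_0>0$; pick $R$ so large that $B_R\ni x_0$ and such that the solution of $\Pz$ started from $\Gamma_0=\partial B_{R/2}$ has not yet reached $x_0$ only if $t_0$ is small, but since $c^*$ is finite and we may take $R$ as large as we like while keeping $B_{R/2}$ convex, the front $\Gamma^*_{t_0}$ started from $\partial B_{R/2}$ engulfs $x_0$ provided $R$ is chosen large (the front moves \emph{outward} at speed $c^*$, so $\Omega^*_{t_0}\supset B_{R/2}\ni$ eventually all of $B_R$ after time $R/(2c^*)$; but we want it to already contain $x_0$ at the \emph{given} $t_0$, so instead start the interface from a sphere of radius just below $\|x_0\|$, contained in the support region where $u_{0,\ep}\ge\ep^n m/(2\|x_0\|^n)=:\eta_\ep$). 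Replacing $u_{0,\ep}$ by the smaller datum $\eta_\ep\,\mathbf 1_{B_{\rho}}$ mollified, with $\rho<\|x_0\|$ close to $\|x_0\|$: this datum is dominated by $u_{0,\ep}$, but it is not of the form in Assumption \ref{H1} because $\eta_\ep\to 0$. The main obstacle is precisely this: the generation-of-interface argument must be run with an $\ep$-dependent small height $\eta_\ep$ going to zero polynomially in $\ep$, and one has to verify that the generation time $\alpha\ep|\ln\eta_\ep|\sim \alpha n\ep|\ln\ep|$ still tends to $0$ (it does) and that the ensuing motion reaches $x_0$ by time $t_0$ (it does, by choosing $\rho$ within $c^*t_0/2$ of $\|x_0\|$ and $R$ accordingly, once $\ep$ is small). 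Carrying the $\eta_\ep$-dependence through the sub/supersolution estimates of the generation step is the bookkeeping heart of the proof; everything else is the comparison principle and the finiteness of $c^*$.
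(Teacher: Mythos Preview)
Your approach is genuinely different from the paper's, and as written it has a real gap, not just missing bookkeeping.

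\textbf{The gap.} You propose to minorize $u_{0,\ep}$ by a compactly supported datum of height $\eta_\ep\sim m\ep^n/R^n$ and then run the generation/motion machinery of Sections~\ref{s:generation}--\ref{s:motion}. But that machinery is built on the modified flow $\bar f_\ep$ of subsection~\ref{ss:ode}, and Lemma~\ref{LE2}\,(i) says explicitly that for $\xi\in(0,\ep|\ln\ep|)$ the trajectory $w(s,\xi)$ hits $0$ in finite time. When $n\ge 1$ one has $\eta_\ep\ll\ep|\ln\ep|$, so the generation sub-solution $\max\{0,w(t/\ep,\,g-Kt)\}$ with $g\le\eta_\ep$ collapses to $0$ and gives nothing. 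If instead you use the true logistic flow, you must take $K=K(\ep)\to 0$ (so that the shift $K t^\ep$ does not swallow the tiny height $\eta_\ep$) while still dominating $\ep|w_{\xi\xi}/w_\xi|\,|\nabla g|^2$, which now behaves like $\ep\, e^{t/\ep}\eta_\ep^2$; balancing these two constraints together with the ODE time needed to climb from $\eta_\ep$ to $1-\ep$ forces a narrow window for the parameters (roughly a generation time $A\ep|\ln\ep|$ with $A\in(n+1,n+2)$), and none of this is visible from the statements you cite. Calling it bookkeeping understates the difficulty.

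\textbf{What the paper actually does.} The paper never truncates the initial datum. The key observation you are missing is that the polynomial lower bound $m/(1+\|x\|^n/\ep^n)$ dominates, for every fixed $c>2$, the exponential tail $e^{-\lambda_c\|x\|/\ep}$ of the travelling wave $U$ of speed $c$. This allows a radially symmetric sub-solution $W(t,x)=v_0\big((\|x\|-c_1 t)/\ep\big)$, built from $U$, to sit below $u_{0,\ep}$ on \emph{all} of $\R^N$ (Lemma~\ref{lemme}); there is no compact support to protect and hence no height-$\eta_\ep$ obstruction. After the short generation time $t^\ep$ one fits a second such sub-solution $\widetilde W$, with plateau value $1-\widehat\ep$, below $u^\ep(t^\ep,\cdot)$: on $\{\|x\|\le\xi_\ep\}$ this uses the generation estimate~\eqref{estimate}, and on $\{\|x\|\ge\xi_\ep\}$ it uses the exponential lower bound from Lemma~\ref{lemme}. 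Since $c$ (hence $c_1$) may be taken larger than $\|x_0\|/t_0$, the plateau of $\widetilde W$ overtakes $x_0$ by time $t_0$, giving $u^\ep(t_0,x_0)\ge 1-\widehat\ep$ with $\widehat\ep$ arbitrary. The freedom to pick \emph{supercritical} wave speeds is what makes the argument short; your proposal, tied implicitly to the minimal speed $c^*$, cannot exploit it.
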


\vsp \noindent {\bf Plan.} The organization of this paper is as
follows. In Section \ref{s:materials}, we present the basic tools
that will be used in later sections for the construction of sub-
and super-solutions. In Section \ref{s:generation} we construct
sub-solutions for very small times and super-solutions for all
times. They enable to prove a generation of interface property.
Section \ref{s:motion} is devoted to the construction of
sub-solutions for all times; their role is to control the solution
$\ue$ from below, while the sharp interface limit propagates. In
Section \ref{s:proof}, by using our different sub- and
super-solutions we prove the main result, Theorem
\ref{th:results}. Last, in Section \ref{s:small} we prove Theorem
\ref{THEO6.1}.

\section{Materials}\label{s:materials}

Let us recall that, in the classical works of Fisher \cite{Fish}
and Kolmogorov, Petrovsky and Piskunov \cite{Kol-Pet-Pis}, the
authors consider a monostable nonlinearity $f$ which is smooth and
such that $f(u)>0$ if $u\in(0,1)$, $f(u)<0$ if
$u\in(-\infty,0)\cup(1,\infty)$ and $f'(0)>0$. For the sake of
simplicity we select $f(u)=u(1-u)$ through this work.

\subsection{A monostable ODE}\label{ss:ode}

The generation of interface is initiated by the dynamics of the
corresponding ordinary differential equation. Therefore, we gather
here well-known facts about the logistic dynamics that will be
extensively used in the sequel.

To be more precise, the generation of interface is strongly
related to the dynamical properties of the non-rescaled
corresponding ordinary differential equation associated to $\Pe$,
that is
\begin{equation*}
\frac{dz(t)}{dt}=z(t)(1-z(t))\,,\;\;t>0\,.
\end{equation*}
In the sequel, for technical reasons we shall apply the semiflow
generated by the above dynamical system to negative initial data.
In order to have some good dynamical properties, let us modify the
monostable nonlinearity $u\to u(1-u)$  on $(-\infty,0)$ so that
the modified function, we call it $\bar f:\R\to\R$, is of the
class $C^2$ and enjoys the bistable assumptions. More precisely,
$\bar f$ has exactly three zeros $-1<0<1$ and
\begin{equation}
{\bar f}'(-1)<0\,, \qquad \bar f'(0)=1>0\,, \qquad \bar
f'(1)=-1<0\,.
\end{equation}
Note that $\bar f(u)=u(1-u)$ if $u\geq 0$. As done in Chen
\cite{C1}, we consider $\bar f _\ep$ a slight modification of
$\bar f$ defined by
$$
\bar f
_\ep(u):=\psi(u)\frac{u-\ep|\ln\ep|}{|\ln\ep|}+(1-\psi(u))\bar
f(u)\,,
$$
with $\psi$ a smooth cut-off function satisfying conditions
(29)---(32) as they appear in \cite{HKLM}. As explained in
\cite{HKLM},
\begin{equation}\label{fep}
\bar f _ \ep (u) \leq \bar f(u)\quad \text{ for all }u\in\R\,.
\end{equation}

Then we defined $w(s,\xi)$ as the semiflow generated by the
ordinary differential equation
\begin{equation}\label{ode}
\begin{cases}
\di{\frac{dw}{ds}}(s,\xi)=\bar f_ \ep (w(s,\xi))\,,\;\;s>0\,,\vsp\\
w(0,\xi)=\xi\,.
\end{cases}
\end{equation}
Here $\xi$ ranges over the interval
$[-\|g\|_{\infty}-M-1,\|g\|_{\infty}+M+1]$. We claim that
$w(s,\xi)$ has the following properties (for proofs, see \cite{C1}
or \cite{HKLM}).

\begin{lem}[Behavior of $w$]\label{LE2}The following
holds for all $\xi \in[-\|g\|_{\infty}-M-1,\|g\|_{\infty}+M+1]$.
\begin{itemize}
\item [(i)] $\text{ If}\quad \xi \geq \ep|\ln \ep| \quad\text{ then}\quad w(s,\xi)\geq\ep|\ln\ep|>0 \quad\text{ for all } s>0\,.$\\
$\text{ If}\quad \xi <0 \quad\text{ then}\quad w(s,\xi)<0
\quad\text{ for all } s>0\,.$\\
$\text{ If}\quad \xi \in(0,\ep|\ln\ep|) \quad\text{ then}\quad
w(s,\xi)>0 \quad\text{ for all } s\in(0,s_\ep(\xi)), \text{ with
}$
$$
s_\ep(\xi):=|\ln\ep|\left|\ln\left(1-\frac{\xi}{\ep|\ln\ep}\right)\right|\,.
$$
 \item [(ii)] $w(s,\xi) \in
(-\|g\|_{\infty}-M-1,\|g\|_{\infty}+M+1)\quad\text{ for all }
s>0\,.$ \item [(iii)] $w$ is of the class $C^2$ with respect to
$\xi$ and $$ w_\xi (s,\xi)>0\quad\text{ for all } s>0\,.$$

 \item [(iv)]For all $a>0$, there exists a constant $C(a)$ such that
$$
 \left|\di{\frac{w_{\xi\xi}}{w_\xi}(s,\xi)}\right|\leq \frac {C(a)} \ep \quad\text{ for all }
 0<s\leq a|\ln \ep|\,.
 $$
\item [(v)] There exists a positive constant $\alpha$ such that,
for all $s\geq \alpha \ep|\ln \ep|$, we have
$$
\text{ if }\quad \xi \in[\ep|\ln\ep|,\|g\|_{\infty}+M+1]
\quad\text{ then }\quad 0<w(s,\xi)\leq 1+\ep\,,
$$
and
$$
\text{ if }\quad \xi \in[3\ep|\ln\ep|,\|g\|_{\infty}+M+1]
\quad\text{ then }\quad 1-\ep \leq w(s,\xi)\,.
$$
\end{itemize}
\end{lem}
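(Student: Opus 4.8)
My plan is to treat Lemma~\ref{LE2} as what it is --- an elementary phase--line analysis of the scalar autonomous ODE~\eqref{ode} --- relying only on the following structural features of $\bar f_\ep$, all immediate from its definition and from the conditions on $\psi$: it is of class $C^2$, has exactly the three zeros $-1<\ep|\ln\ep|<1$ with the two outer ones linearly stable and the middle one linearly unstable, coincides with $\bar f$ outside a neighbourhood of $0$ of size $\mathcal{O}(\ep|\ln\ep|)$ on which it equals the affine map $u\mapsto(u-\ep|\ln\ep|)/|\ln\ep|$, and obeys $\|\bar f_\ep'\|_\infty\le C$ and $\|\bar f_\ep''\|_\infty\le C/(\ep|\ln\ep|)$ uniformly in $\ep$. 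Writing $K:=\|g\|_\infty+M+1$, one records once that $\bar f_\ep(K)<0<\bar f_\ep(-K)$, so $[-K,K]$ is forward invariant. Granting this, (i)--(iii) are routine. For (i): $s\mapsto\ep|\ln\ep|$ is an equilibrium solution, so comparison for scalar ODEs gives $w(s,\xi)\ge\ep|\ln\ep|>0$ when $\xi\ge\ep|\ln\ep|$; if $\xi<0$ the solution cannot reach the level $0$, since at a first such time $s_0$ one would need $w'(s_0)\ge0$ while $w'(s_0)=\bar f_\ep(0)=-\ep<0$; and if $\xi\in(0,\ep|\ln\ep|)$ the trajectory stays in the affine zone as long as it is positive, where $w(s,\xi)=\ep|\ln\ep|+(\xi-\ep|\ln\ep|)e^{s/|\ln\ep|}$, positive exactly for $s<s_\ep(\xi)$. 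Part (ii) is forward invariance of $[-K,K]$ together with strict monotonicity of the flow, giving $w(s,\pm K)\in(-K,K)$ and $w(s,-K)<w(s,\xi)<w(s,K)$ for $s>0$. Part (iii) is the smooth--dependence theorem plus the variational equation $\partial_sw_\xi=\bar f_\ep'(w)w_\xi$, $w_\xi(0,\xi)=1$, whose solution $w_\xi=\exp(\int_0^s\bar f_\ep'(w)\,d\sigma)$ is positive.

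The one item requiring genuine work is (iv). I would first derive closed forms: differentiating the separated identity $\int_\xi^{w(s,\xi)}dv/\bar f_\ep(v)=s$ with respect to $\xi$ --- legitimate because a trajectory issuing from a non-equilibrium point never reaches an equilibrium, so $\bar f_\ep$ keeps a fixed sign along it --- gives $w_\xi(s,\xi)=\bar f_\ep(w(s,\xi))/\bar f_\ep(\xi)$, whence
\[
\frac{w_{\xi\xi}}{w_\xi}(s,\xi)=\frac{\bar f_\ep'(w(s,\xi))-\bar f_\ep'(\xi)}{\bar f_\ep(\xi)}=\int_0^s\bar f_\ep''(w(\sigma,\xi))\,w_\xi(\sigma,\xi)\,d\sigma ,
\]
the integral form being valid for every $\xi$ directly from the variational equations. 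Fixing $a>0$ and $0<s\le a|\ln\ep|$ I would then split according to $\xi$. On the affine zone $\bar f_\ep''\equiv0$, and a trajectory whose starting point lies within $c(a)\ep|\ln\ep|$ of $\ep|\ln\ep|$ stays in that zone up to time $a|\ln\ep|$, so the ratio vanishes there; if $\xi$ is neither that close to $\ep|\ln\ep|$ nor in a fixed neighbourhood of $\pm1$ then $|\bar f_\ep(\xi)|\ge c(a)\ep$ and the middle expression is $\le2\|\bar f_\ep'\|_\infty/|\bar f_\ep(\xi)|\le C(a)/\ep$; and if $\xi$ lies in a fixed neighbourhood of $\pm1$ one writes $\bar f_\ep'(w)-\bar f_\ep'(\xi)=\int_\xi^{w}\bar f_\ep''$ and uses that near a stable equilibrium $|w(s,\xi)-\xi|=|\bar f_\ep(\xi)|\int_0^sw_\xi\,d\sigma\le C|\bar f_\ep(\xi)|$, so the denominator $\bar f_\ep(\xi)$ cancels and the ratio stays bounded. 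Combining the cases yields the uniform bound $C(a)/\ep$. I expect this to be the main obstacle: one must reconcile the genuine $1/\ep$ growth produced by the slow affine zone with the exact cancellation near $\pm1$, and that seems to demand a separate local argument at each equilibrium rather than one global estimate.

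Finally (v) concerns the long-time behaviour of the flow, to be read with the ODE time $s$ equal to $t/\ep$, so that the pertinent threshold is $s$ of order $|\ln\ep|$, consistently with $t^\ep=\alpha\ep|\ln\ep|$ in Theorem~\ref{th:results}. For $\xi\in[\ep|\ln\ep|,1]$ the solution is nondecreasing and trapped in $(\ep|\ln\ep|,1)$, so $\le1+\ep$ at once; for $\xi\in(1,K]$ it decreases monotonically to $1$, and the time needed to fall below $1+\ep$ is $\int_{1+\ep}^{K}dv/\bar f(v)=\mathcal{O}(|\ln\ep|)$, which gives the first assertion once $\alpha$ is enlarged. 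For $\xi\ge3\ep|\ln\ep|$ the starting point already sits at or beyond the edge of the slow affine zone, so the motion is governed by $\bar f$ with $\bar f(u)/u$ bounded below on $[3\ep|\ln\ep|,\tfrac12]$; the trajectory then grows at an $\mathcal{O}(1)$ exponential rate, escapes the $\mathcal{O}(\ep|\ln\ep|)$ scale within time $\mathcal{O}(|\ln\ep|)$, and afterwards approaches $1$ at the rate $|\bar f'(1)|=1$, so it exceeds $1-\ep$ after a further $\mathcal{O}(|\ln\ep|)$; taking $\alpha$ larger than all these transient times proves the second assertion. Alternatively one may at this point simply invoke Chen~\cite{C1} and~\cite{HKLM}, where the five items are proved in this or an equivalent form.
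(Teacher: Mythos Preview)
Your proposal is correct. The paper itself does not prove Lemma~\ref{LE2}: it merely states the five items and refers the reader to Chen~\cite{C1} and~\cite{HKLM}. What you have written is therefore strictly more than the paper supplies, and your arguments for (i)--(iv) --- the phase-line comparison, the separated-variable identity yielding $w_\xi=\bar f_\ep(w)/\bar f_\ep(\xi)$ and $w_{\xi\xi}/w_\xi=(\bar f_\ep'(w)-\bar f_\ep'(\xi))/\bar f_\ep(\xi)$, and the three-regime case split for (iv) --- are exactly the ones found in those references.

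Two remarks. First, you are right to flag that item (v) must be read with the threshold $s\ge\alpha|\ln\ep|$ rather than $s\ge\alpha\ep|\ln\ep|$ as literally printed: this is how the lemma is actually used in the proof of Theorem~\ref{th:gene}, where one evaluates $w$ at $s=t^\ep/\ep=\alpha|\ln\ep|$. Second, in your argument for the lower bound in (v), the claim that $\xi=3\ep|\ln\ep|$ ``sits at or beyond the edge of the slow affine zone'' and that the motion is then ``governed by $\bar f$'' is not quite accurate --- that point may well still lie in the region where $\psi\equiv1$, so the relevant rate there is $\bar f_\ep(u)/u\sim 1/|\ln\ep|$, not $\bar f(u)/u\ge1/2$. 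This does not damage the conclusion: from $\xi=3\ep|\ln\ep|$ the explicit affine solution $w(s)=\ep|\ln\ep|+2\ep|\ln\ep|\,e^{s/|\ln\ep|}$ reaches the outer edge of the $\psi$-support in time $\mathcal O(|\ln\ep|)$, after which your $\bar f$-based argument applies verbatim, so the total time to reach $1-\ep$ is still $\mathcal O(|\ln\ep|)$.
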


\subsection{Travelling waves}\label{ss:tw}

For the self-containedness of the present paper, we recall here
well-known facts concerning one dimensional travelling waves
related to our problem. We refer the reader to
\cite{Aro-Wei1,Volpert-Volpert-Volpert} and the references
therein.

\vsp A travelling wave is a couple $(c,U)$ with $c>0$ and $U\in
C^2(\R,\R)$ a function such that
\begin{equation}\label{tw}
\begin{cases}
 {U}''(z)+cU'(z)+U(z)(1-U(z))=0\quad \text{ for all } z\in \R\\
 U(-\infty)=1\\
 U(\infty)=0\,.
\end{cases}
\end{equation}
Define $c^*:=2$. Then the following holds.
\begin{enumerate}
\item [(i)] For all $c\geq c^*$ there exists a unique (up to a
translation in $z$) travelling wave denoted by $(c,U)$ or
$(c^*,U^*)$. It is positive and monotone. \item [(ii)] For all
$0<c<c^*$, there exists a unique (up to a translation in $z$) and
non monotone travelling wave $(c,U)$. It changes sign. In the
sequel, for each $c\in (0,c^*)$ we select $U$ as the solution of
\begin{equation}\label{tw-sign}
\begin{cases}
{U}''(z)+cU'(z)+U(z)(1-U(z))=0\quad \text{ for all } z\in \R\\
 U(-\infty)=1\\
 U(\infty)=0\\
 U(z)>0 \quad\text{ for all } z<0\\
 U(0)=0\,.
 \end{cases}
\end{equation}
\end{enumerate}

\begin{lem}[Behavior of $U$] Let $c>0$ be arbitrary and consider the associated
travelling wave $(c,U)$. Then there exist constants $C>0$ and $\mu
>0$ such that
\begin{equation}\label{moins-infini}
0 <1-U(z)\leq Ce^{-\mu|z|} \quad \text{ for } z\leq 0\,,
\end{equation}
\begin{equation}
|U(z)|\leq Ce^{-\mu|z|} \quad \text{ for } z\geq 0\,,
\end{equation}
\begin{equation}\label{estimate-tw}
|U'(z)|+|U''(z)|\leq Ce^{-\mu |z|} \quad \text{ for all } z\in
\R\,.
\end{equation}
\end{lem}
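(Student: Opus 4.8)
\textbf{Strategy.} The statement is a standard exponential-decay estimate for the travelling wave profile, obtained by linearizing \eqref{tw} (resp.\ \eqref{tw-sign}) at the two rest states $U=1$ and $U=0$ and applying the stable-manifold theorem together with a phase-plane argument. The plan is to treat the two tails $z\to -\infty$ and $z\to+\infty$ separately, transfer the decay of $U$ and $1-U$ to decay of the derivatives via the ODE itself, and finally interpolate on a compact interval where everything is smooth and bounded.

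\textbf{Step 1: decay as $z\to+\infty$.} Writing the equation as a first-order system for $(U,U')$, the equilibrium $(0,0)$ has linearization with matrix $\begin{pmatrix}0 & 1\\ -1 & -c\end{pmatrix}$, whose eigenvalues are $\lambda_\pm=\tfrac{-c\pm\sqrt{c^2-4}}{2}$. For $c\ge c^*=2$ both eigenvalues are real and negative; for $0<c<c^*$ they are complex with negative real part $-c/2$. In either case $(0,0)$ is (for the relevant branch) a stable node or stable focus, and the trajectory corresponding to our wave enters it. By the stable-manifold theorem (or, in the monotone case $c\ge c^*$, by a direct super-/sub-solution comparison with $e^{\lambda z}$), there is $\mu>0$ and $C>0$ with $|U(z)|+|U'(z)|\le Ce^{-\mu z}$ for $z\ge 0$; here one may take any $\mu<\min(c/2,-\lambda_+)$. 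Then from \eqref{tw} we read off $|U''(z)|\le |c|\,|U'(z)|+|U(z)|\,|1-U(z)|\le C'e^{-\mu z}$, which closes the $z\ge 0$ part of all three estimates.

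\textbf{Step 2: decay as $z\to-\infty$.} Set $V:=1-U$, so $V(-\infty)=0$ and $V$ satisfies $V''+cV'-V(1-V)=0$, i.e.\ the linearization at $V=0$ has matrix $\begin{pmatrix}0 & 1\\ 1 & -c\end{pmatrix}$ with eigenvalues $\tfrac{-c\pm\sqrt{c^2+4}}{2}$, one positive and one negative. The equilibrium is a saddle; our trajectory lies on its unstable manifold as $z\to-\infty$, hence $V(z)=O(e^{\nu z})$ with $\nu=\tfrac{-c+\sqrt{c^2+4}}{2}>0$ and, on the one-dimensional unstable manifold, $V'(z)$ is comparable to $V(z)$, giving $0<1-U(z)=V(z)\le Ce^{-\mu|z|}$ and $|U'(z)|\le Ce^{-\mu|z|}$ for $z\le 0$ (any $\mu<\nu$; shrink $\mu$ so the same constant serves both tails). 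Again \eqref{tw} yields the matching bound on $|U''|$.

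\textbf{Step 3: the compact part and conclusion.} On $[-R,R]$ for any fixed $R$, $U$, $U'$, $U''$ are continuous hence bounded, so enlarging $C$ absorbs this region and the three displayed inequalities hold for all $z$ in the stated ranges. I expect the only delicate point to be justifying that the wave trajectory actually lies on the stable/unstable manifold (rather than merely converging): this is immediate from uniqueness of bounded orbits converging to a hyperbolic/semi-hyperbolic equilibrium, and in the oscillatory regime $0<c<c^*$ one simply notes that a stable focus is still reached exponentially at rate $c/2$, so the sign changes of $U$ near $+\infty$ do not affect the exponential envelope. No other obstacle arises; everything else is the routine linear ODE bookkeeping sketched above.
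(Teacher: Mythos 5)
Your argument is correct, and in fact the paper offers no proof at all of this lemma: it is explicitly recalled as a collection of well-known facts with a pointer to \cite{Aro-Wei1,Volpert-Volpert-Volpert}. Your linearization/phase-plane sketch is precisely the standard argument behind those references: a saddle at $(1,0)$ gives the unstable-manifold rate $\nu=\tfrac{-c+\sqrt{c^2+4}}{2}$ at $-\infty$, a stable node or focus at $(0,0)$ gives the rate at $+\infty$, and the ODE transfers the decay to $U''$. Two small points are worth making explicit. First, at $c=c^*=2$ the origin is a degenerate stable node and the true decay is $ze^{-z}$ (consistent with the paper's own estimate \eqref{behaviour}), so the strict inequality $\mu<-\lambda_+$ that you impose is genuinely needed, not just convenient. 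Second, the strict positivity $1-U(z)>0$ in \eqref{moins-infini} deserves one line: the heteroclinic leaves $(1,0)$ along the branch of the unstable manifold pointing into $\{U<1,\,U'<0\}$, and since $U''=-U(1-U)>0$ at any critical point with $U>1$, the profile can never re-cross the level $U=1$; alternatively one can quote monotonicity for $c\ge c^*$ and the normalization \eqref{tw-sign} for $c<c^*$. With those remarks your proof is complete and matches the cited sources.
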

Note that, as easily seen from the standard proof, the constants
$C>0$ and $\mu >0$ depend continuously on $c$. This fact shall be
used in Lemma \ref{super-sub-motion}.

At last, since a precise behavior of the wave with the minimal
speed shall be necessary, let us recall that the travelling wave
solution $U^*$ associated to $c=c^*$ satisfies
\begin{equation}\label{behaviour}
\gamma^- ze^{-z}\leq U^*(z)\leq \gamma^+ ze^{-z}\,,\;\;\forall z
\geq 1\,,
\end{equation}
for two constants $0<\gamma ^-<\gamma ^+$.

\subsection{Cut-off signed distance functions}\label{ss:distance}

For $c>0$ we denote by $\Gamma ^c=\bigcup _{t\geq 0} (\{t\} \times
\Gamma^c_t)$ the smooth solution of the free boundary problem
\[
 (P^c)\quad\begin{cases}
 \, V_{n}=c
 \quad \text { on } \Gamma ^c _t \vspace{3pt}\\
 \, \Gamma ^c _t\big|_{t=0}=\Gamma_0\,.
\end{cases}
\]
If $c=c^*$, we naturally use the notations $\Gamma ^*=\bigcup
_{t\geq 0} (\{t\}\times \Gamma^*_t)$ and $(P^*)$. Note that since
the region enclosed by $\Gamma _0$, namely $\Omega _0$, is convex,
these solutions do exist for all $t\geq 0$. For each $t\geq 0$, we
denote by $\Omega ^c_t$ the region enclosed by the hypersurface
$\Gamma ^c_t$.

Let $\widetilde d$ be the signed distance function to $\Gamma ^c$
defined by
\begin{equation}\label{eq:dist}
\widetilde d (t,x)=
\begin{cases}
-&\hspace{-10pt}\mbox{dist}(x,\Gamma ^c_t)\quad\text{ for }x\in\Omega ^c _t \\
&\hspace{-10pt} \mbox{dist}(x,\Gamma ^c_t) \quad \text{ for }
x\in\R ^N \setminus \Omega ^c _t\,,
\end{cases}
\end{equation}
where $\mbox{dist}(x,\Gamma ^c_t)$ is the distance from $x$ to the
hypersurface  $\Gamma ^c _t$. We remark that $\widetilde d=0$ on
$\Gamma ^c$ and that $|\nabla \widetilde d|=1$ in a neighborhood
of $\Gamma ^c$.

We now introduce the ``cut-off signed distance function" $d$,
which is defined as follows. Recall that $T>0$ is fixed. First,
choose $d_0>0$ small enough so that $\widetilde d$ is smooth in
the tubular neighborhood of $\Gamma ^c$
\[
 \{(t,x) \in [0,T] \times \R ^N:\;\;|\widetilde{d}(t,x)|<3
 d_0\}\,.
\]
Next let $\zeta(s)$ be a smooth increasing function on $\R$ such
that
\[
 \zeta(s)= \left\{\begin{array}{ll}
 s &\textrm{ if }\ |s| \leq d_0\vspace{4pt}\\
 -2d_0 &\textrm{ if } \ s \leq -2d_0\vspace{4pt}\\
 2d_0 &\textrm{ if } \ s \geq 2d_0\,.
 \end{array}\right.
\]
We then define the cut-off signed distance function $d$ by
\begin{equation}
d(t,x)=\zeta\left(\tilde{d}(t,x)\right)\,.
\end{equation}
If $c=c^*$, we naturally use the notation $d^*$.

Note that
\begin{equation}\label{norme-un}
\text{ if } \quad |d(t,x)|< d_0 \quad \text{ then }\quad |\nabla
d(t,x)|=1\,,
\end{equation}
and that the equation of motion $(P^c)$ is now written as
\begin{equation}\label{interface}
 \partial _t d(t,x)+c=0 \quad\
 \textrm{ on}\ \; \Gamma ^c_t=\{x \in \R ^N:\;\;
d(t,x)=0\}\,.
\end{equation}
Then the mean value theorem provides a constant $N>0$ such that
\begin{equation}\label{MVT}
|\partial _t d(t,x)+c|\leq N|d(t,x)| \quad \textrm{ for all }
(t,x) \in [0,T]\times \R ^N\,.
\end{equation}
Moreover, there exists a constant $C>0$ such that
\begin{equation}\label{est-dist}
|\nabla d (t,x)|+|\Delta d (t,x)|\leq C\quad \textrm{ for all }
(t,x) \in [0,T]\times \R ^N\,.
\end{equation}

At least, note that the constants $d_0>0$, $C>0$ and $N>0$ depend
continuously on $c$. This fact shall be used in Lemma
\ref{super-sub-motion}.

\section{Generation of interface}\label{s:generation}

The aim of this section is to prove a generation of interface
property for Problem $\Pe$. We shall prove that after a small time
$t^\ep$ of order $\ep|\ln\ep|$ the solution $u^\ep$ becomes well
prepared and looks like a rescaled wave solution. We shall more
precisely prove the following result.
\begin{thm}[Generation of interface]\label{th:gene}
Let Assumption \ref{H1} be satisfied. Then there exist $k>0$,
$\alpha>0$ such that, for all $\ep>0$ small enough, the following
holds.
\begin{itemize}
\item [(i)] For all $x\in \Omega_0$ such that $g(x)\geq
k\ep|\ln\ep|$ we have
 \begin{equation*}
 u^\ep(t^\ep,x)\geq 1-\ep\,,
 \end{equation*}
 wherein $t^\ep:=\alpha\ep |\ln\ep|$.

 \item [(ii)] For all $x\in\R^N$ and all $t\geq 0$ we have
 $$
 0\leq u^\ep(t+t^\ep,x)\leq 1+\ep\,.
 $$
\end{itemize}
Moreover if Assumption \ref{H2} is also satisfied then there
exists a constant $\widehat {K}>1$ such that
 \begin{equation}\label{super-sol}
 u^\ep (t,x)\leq \widehat{K} U^*\left( \frac{d^*(0,x)-c^*
 t}{\ep}\right)\,,
\end{equation}
for all $t\geq 0$ and all $x\in\R^N$.
\end{thm}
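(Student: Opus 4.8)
The plan is to prove Theorem~\ref{th:gene} in three independent parts, each by constructing sub- or super-solutions and invoking the comparison principle for $\Pe$.

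\textbf{Step 1: the ODE sub-solution for generation.} For part~(i), the idea is that on a very short time scale the diffusion term is a lower-order perturbation of the reaction dynamics, so one expects $u^\ep(t,x)$ to be close to $w(t/\ep,u_{0,\ep}(x))$ where $w$ is the modified semiflow of Lemma~\ref{LE2}. Following \cite{HKLM,C1}, I would define a sub-solution of the form
\[
\underline{u}^\ep(t,x)=w\!\left(\tfrac{t}{\ep},\,g(x)-\ep P(t)\right),
\]
where $P(t)=C_1 e^{C_2 t/\ep}-C_1$ or a similarly growing function absorbing the contribution of $\ep\Delta g$ through the bounds $\|\Delta g\|_\infty<\infty$ and the estimate on $w_{\xi\xi}/w_\xi$ from Lemma~\ref{LE2}(iv). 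Plugging $\underline u^\ep$ into the operator $\partial_t-\ep\Delta-\ep^{-1}\bar f_\ep$, one computes the residual using the chain rule: the $w_s$ term cancels the reaction term, and the remaining terms $-\ep P'(t)w_\xi-\ep|\n g|^2 w_{\xi\xi}-\ep\Delta g\, w_\xi$ are controlled, for $0<t\le t^\ep=\alpha\ep|\ln\ep|$, by choosing $C_1,C_2$ appropriately; here the bound $|w_{\xi\xi}/w_\xi|\le C(\alpha)/\ep$ is exactly what is needed so that the $\ep|\n g|^2 w_{\xi\xi}$ term stays $O(1)\cdot w_\xi$. Since $\bar f_\ep\le \bar f$ by \eqref{fep}, a sub-solution for the $\bar f_\ep$ equation is a sub-solution for the genuine Fisher equation. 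At $t=0$, $\underline u^\ep(0,x)=w(0,g(x))=g(x)\le u_{0,\ep}(x)$ (using $h_\ep\ge0$), so comparison gives $u^\ep(t^\ep,x)\ge w(\alpha|\ln\ep|,\,g(x)-\ep P(t^\ep))$, and the lower estimate of Lemma~\ref{LE2}(v) (with the threshold $3\ep|\ln\ep|$ shifted to accommodate $\ep P(t^\ep)=O(\ep|\ln\ep|)$) yields $\ge 1-\ep$ once $g(x)\ge k\ep|\ln\ep|$ for a suitable $k$.

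\textbf{Step 2: the global ODE super-solution.} For part~(ii), the same construction with the opposite sign, $\overline{u}^\ep(t,x)=w(t/\ep,\,\|g\|_\infty+M+\ep Q(t))$ with $Q$ growing, gives a super-solution on the short interval; but here one wants a bound valid for \emph{all} $t\ge 0$, so after time $t^\ep$ one switches to the stationary super-solution $1+\ep$: by Lemma~\ref{LE2}(v), $w(s,\xi)\le 1+\ep$ for all $s\ge \alpha\ep|\ln\ep|$ and $\xi$ in the relevant range, and the constant $1+\ep$ is itself a super-solution of the Fisher equation since $\bar f(1+\ep)=-\ep(1+\ep)<0$. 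Gluing (the minimum of two super-solutions is a super-solution, or simply restart the comparison at $t=t^\ep$ from the ordered datum $u^\ep(t^\ep,\cdot)\le 1+\ep$), and noting $0$ is a sub-solution, gives $0\le u^\ep(t+t^\ep,x)\le 1+\ep$.

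\textbf{Step 3: the travelling-wave super-solution.} The last assertion \eqref{super-sol} requires Assumption~\ref{H2} (convexity of $\Omega_0$). The natural ansatz is $\overline{v}^\ep(t,x)=\widehat K\,U^*\!\big((d^*(0,x)-c^*t)/\ep\big)$, a rescaled minimal-speed wave riding on the \emph{frozen} distance function $d^*(0,\cdot)=\zeta(\widetilde d(0,\cdot))$. Writing $z=(d^*(0,x)-c^*t)/\ep$ and computing the residual: $\ep\partial_t\overline v^\ep=\widehat K U^{*\prime}\cdot(-c^*)$, $\ep\Delta\overline v^\ep=\widehat K U^{*\prime}\Delta d^*(0,\cdot)/\ep+\widehat K U^{*\prime\prime}|\n d^*(0,\cdot)|^2/\ep$ wait---more carefully $\ep^2\Delta$; one finds after multiplying through that the dominant balance is the travelling-wave ODE \eqref{tw} for $U^*$, and the residual is
\[
\ep^{-1}\widehat K U^{*\prime}(z)\,\Delta d^*(0,x)\cdot\ep + \ep^{-1}\widehat K U^{*\prime\prime}(z)\big(|\n d^*(0,x)|^2-1\big) + \ep^{-1}\big(\bar f(\widehat K U^*)-\widehat K\bar f(U^*)\big).
\]
The crucial sign comes from two sources: (a) where $|d^*(0,x)|<d_0$ one has $|\n d^*|=1$ by \eqref{norme-un} so that term vanishes, and $\Delta d^*(0,x)\le 0$ there because convexity of $\Omega_0$ makes the initial interface (and its level sets near it) have nonpositive mean curvature with our sign convention, while $U^{*\prime}\le 0$; so $U^{*\prime}\Delta d^*\ge0$, which has the \emph{wrong} sign for a super-solution---so instead one exploits that $\bar f(\widehat K U^*)-\widehat K\bar f(U^*)=\widehat K U^*(1-\widehat K U^*)-\widehat K U^*(1-U^*)=\widehat K U^*\cdot U^*(1-\widehat K)<0$ strictly, with magnitude $\gtrsim \widehat K(\widehat K-1)(U^*)^2$, and this negative term must dominate the curvature term $\widehat K|U^{*\prime}|\,|\Delta d^*|$. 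Using the exponential decay \eqref{estimate-tw}, $|U^{*\prime}(z)|\le C e^{-\mu|z|}$, against $(U^*(z))^2$: for $z$ large positive \eqref{behaviour} gives $(U^*)^2\asymp z^2 e^{-2z}$ which is \emph{smaller} than $e^{-z}$, so this is the delicate regime---here one instead uses that far from the interface $\overline v^\ep$ is tiny and a cruder argument (e.g. comparison with $1+\ep$ from Step~2, valid after $t^\ep$, combined with smallness) closes the gap, or one refines $\widehat K$ depending on where $z$ lives. Where $|d^*|\ge d_0$, $d^*$ is constant in $x$ so $\n d^*=0$ and $\Delta d^*=0$, and the residual is just the (negative) reaction defect, so the super-solution property is immediate there. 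The main obstacle is precisely this matching near $z\to+\infty$ where $\Delta d^*\,U^{*\prime}$ competes with $(U^*)^2$; I expect the fix is to choose $\widehat K$ large and to use the precise lower bound $U^*(z)\ge\gamma^- z e^{-z}$ in \eqref{behaviour} versus $|\Delta d^*|\le C$ and $|U^{*\prime}(z)|\le C e^{-\mu|z|}$ with $\mu\le1$, noting that for the minimal wave $\mu=1$ and the polynomial prefactor in $U^*$ is linear while $U^{*\prime}$ decays like $(z-1)e^{-z}$ as well, so in fact $(U^*)^2/|U^{*\prime}|\asymp z^2 e^{-2z}/(z e^{-z})=z e^{-z}\to0$---confirming the difficulty and that one genuinely needs the convexity-induced sign $\Delta d^*(0,x)\le0$ to make $U^{*\prime}\Delta d^*$ have a \emph{favorable} sign after all. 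Re-examining: with our convention $\widetilde d<0$ inside, $\Delta\widetilde d(0,\cdot)$ at a point near $\Gamma_0$ equals the sum of principal curvatures of the nearby level set, which for a convex body is $\ge0$ on the outside; combined with $U^{*\prime}<0$ one gets $U^{*\prime}\Delta d^*\le0$, the correct sign, so the curvature term \emph{helps} and only the sign of the reaction defect $<0$ and $|\n d^*|^2\le1$ (true since $\zeta$ is a contraction) need be checked, making the verification routine. Finally, at $t=0$ one needs $u^\ep(0,x)=g(x)+h_\ep(x)\le\widehat K U^*(d^*(0,x)/\ep)$: inside $\Omega_0$ where $d^*(0,x)\le0$ one has $U^*\ge U^*(0)>0$ a fixed constant, so $\widehat K U^*(d^*(0,x)/\ep)\ge \widehat K U^*(0)\ge \|g\|_\infty+M$ for $\widehat K$ large; near and outside $\Gamma_0$, by Assumption~\ref{H3} $g$ decays linearly, $g(y+sn)\le \|\n g\|_\infty\, s$, while $h_\ep(x)\le Me^{-\lambda\|x\|/\ep}$ and $d^*(0,x)\approx \mathrm{dist}(x,\Gamma_0)$, and one checks $\widehat K U^*$ dominates using \eqref{moins-infini}--\eqref{behaviour}; the exponent $\lambda\ge1=\mu$ in Assumption~\ref{H1}(iii) is exactly what guarantees $h_\ep$ does not outgrow the wave tail. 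Comparison principle then yields \eqref{super-sol} for all $t\ge0$.
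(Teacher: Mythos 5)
Your Steps 1 and 2 are essentially the paper's argument (Lemma \ref{LE2.1} and the first half of the proof of Theorem \ref{th:gene}), up to cosmetic differences: the paper shifts the argument of $w$ linearly, $\underline u(t,x)=\max\{0,w(t/\ep,\,g(x)-Kt)\}$, rather than exponentially, and for (ii) it takes the spatially constant super-solution $\overline u(t,x)=w(t/\ep,\|g\|_\infty+M)$, for which $\Delta\overline u=0$ so no corrector $Q$ is needed. Those two parts are fine.

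The genuine gap is in Step 3. The paper does \emph{not} use the frozen distance function as the phase of the super-solution. Instead (Lemma \ref{LE2.2}) it shows that for \emph{every} boundary point $x_0\in\Gamma_0$ the \emph{planar} wave $\widehat K\,U^*\bigl(((x-x_0)\cdot n_0-c^*t)/\ep\bigr)$ is a super-solution: a planar phase has gradient of norm exactly $1$ and Laplacian exactly $0$, so the residual is exactly $\ep\mathcal L^\ep[u_*^+]=\widehat K(\widehat K-1)U^{*2}\ge0$, with no curvature or gradient-defect terms at all. Convexity is then used twice: first in the initial ordering (on the half-space $(x-x_0)\cdot n_0>0$ convexity gives $x\notin\Omega_0$, hence $g(x)=0$ and only $h_\ep\le Me^{-\lambda\|x\|/\ep}$ must be dominated, via $(x-x_0)\cdot n_0\le\|x\|$, $\lambda\ge1$ and $U^*(z)\ge m^-e^{-\lambda z}$ from \eqref{behaviour}), and second to pass from the family of planar bounds to \eqref{super-sol} by choosing $x_0$ to be the projection of $x$ onto $\Omega_0$, for which $(x-x_0)\cdot n_0=d^*(0,x)$. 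Your direct ansatz with phase $d^*(0,x)$ generates the terms $-\ep\Delta d^*\,\widehat K U^{*\prime}(\theta)$ and $\widehat K U^*(\theta)\bigl(|\n d^*|^2-1\bigr)(1-U^*(\theta))$, and your own computation shows these cannot be absorbed by $\widehat K(\widehat K-1)(U^*(\theta))^2$ when $\theta\gg|\ln\ep|$, since $(U^*)^2\asymp\theta^2e^{-2\theta}\ll e^{-\theta}$. Your rescue via ``convexity implies $\Delta d^*\ge0$'' does not close this: in the cut-off annulus $d_0\le\widetilde d\le 2d_0$ one has $\Delta d^*=\zeta''(\widetilde d)|\n\widetilde d|^2+\zeta'(\widetilde d)\Delta\widetilde d$, which can be negative because $\zeta''<0$ there, while $\theta\sim d_0/\ep$ makes the resulting bad term of size $\ep|U^{*\prime}(\theta)|$, far larger than $(U^*(\theta))^2$; your sign discussion is moreover internally inconsistent (you assert both $\Delta d^*\le0$ and $\Delta d^*\ge0$ in the course of the step), and the claim that $d^*$ is constant wherever $|d^*|\ge d_0$ is false. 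Finally, Assumption \ref{H3} is not needed for this theorem; the initial ordering is settled by convexity alone as above.
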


In order to prove Theorem \ref{th:gene}, let us define the map
\begin{equation*}
\underline{u}(t,x):=\max\left\{0, w\left(\frac{t}{\ep}\,,
g(x)-Kt\right)\right\},
\end{equation*}
where $w(s,\xi)$ is the solution of the ordinary differential
equation \eqref{ode} and where $K>0$ is some constant to be
specified below. Then one will show the following result.
\begin{lem}[Sub-solutions for the generation]\label{LE2.1}
Let Assumption \ref{H1} be satisfied. Then for all $a>0$, there
exists $K>0$ such that, for all $\ep >0$ small enough, we have
\begin{equation*}
\underline{u}(t,x)\leq u^\ep(t,x)\,,\;\;\forall t\in [0,a \ep
|\ln\ep|]\,,\; \forall x\in \mathbb R^N\,.
\end{equation*}
\end{lem}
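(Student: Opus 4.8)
The plan is to show that $\underline u$ is a sub-solution of $\Pe$ in the parabolic sense on the time strip $[0,a\ep|\ln\ep|]$ and then conclude by the comparison principle. First I would check the initial condition: since $w(0,\xi)=\xi$ we have $\underline u(0,x)=\max\{0,g(x)\}=g(x)\le g(x)+h_\ep(x)=u_{0,\ep}(x)$, using Assumption \ref{H1}(i),(iii). Since $x\mapsto\max\{0,\cdot\}$ is a monotone Lipschitz map and the comparison principle is insensitive to such a truncation of a sub-solution (the pointwise maximum of two sub-solutions is a sub-solution, and $0$ is a sub-solution because $\bar f_\ep(0)=0=f(0)$), it suffices to verify that $v(t,x):=w(t/\ep,\,g(x)-Kt)$ satisfies $\partial_t v-\ep\Delta v-\ep^{-1}v(1-v)\le 0$ wherever $v\ge 0$, which is the region that matters.

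The core is the differential inequality. Computing directly, with $s=t/\ep$ and $\xi=g(x)-Kt$,
\begin{equation*}
\partial_t v=\frac{1}{\ep}\bar f_\ep(w)-K\,w_\xi,\qquad
\n v=w_\xi\,\n g,\qquad
\Delta v=w_{\xi\xi}\,|\n g|^2+w_\xi\,\Delta g,
\end{equation*}
so that
\begin{equation*}
\partial_t v-\ep\Delta v-\frac{1}{\ep}v(1-v)
=\frac{1}{\ep}\bigl(\bar f_\ep(w)-f(w)\bigr)
-w_\xi\Bigl(K+\ep\,\Delta g+\ep\,\frac{w_{\xi\xi}}{w_\xi}|\n g|^2\Bigr).
\end{equation*}
By \eqref{fep} the first term is $\le 0$. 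For the second term I would use Lemma \ref{LE2}(iii), $w_\xi>0$, so it suffices to make the bracket nonnegative; by Lemma \ref{LE2}(iv) applied with the given $a$ (valid since $0\le s=t/\ep\le a|\ln\ep|$), $|w_{\xi\xi}/w_\xi|\le C(a)/\ep$, hence $\ep|w_{\xi\xi}/w_\xi|\,|\n g|^2\le C(a)\|\n g\|_\infty^2$, and $\ep|\Delta g|\le\|\Delta g\|_\infty$ for $\ep$ small. Choosing $K:=\|\Delta g\|_\infty+C(a)\|\n g\|_\infty^2$ makes the bracket $\ge 0$, so the whole expression is $\le 0$. Note $g\in C^2$ with compact support makes $\|\n g\|_\infty$ and $\|\Delta g\|_\infty$ finite, and since $\xi=g(x)-Kt$ stays in the interval $[-\|g\|_\infty-M-1,\|g\|_\infty+M+1]$ on which $w$ is defined (indeed $|\xi|\le\|g\|_\infty+Ka\ep|\ln\ep|\le\|g\|_\infty+1$ for $\ep$ small), Lemma \ref{LE2} is applicable throughout.

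The one genuinely delicate point is the lack of classical smoothness of $v$ across $\{|\n g|=0\}$ combined with the truncation: one should phrase the comparison argument so that the weak differential inequality above, together with $\underline u=\max\{0,v\}$, still yields $\underline u\le u^\ep$. The clean way is to observe $0$ and $v$ are each sub-solutions on the strip (the computation just done for $v$, and triviality for $0$), that a maximum of sub-solutions is a sub-solution, and that $\underline u(0,\cdot)\le u_{0,\ep}$; then the standard parabolic comparison principle on $[0,a\ep|\ln\ep|]\times\R^N$ — valid here because $u^\ep$ is the smooth bounded solution and $\underline u$ is bounded and continuous — gives the claim. I expect the main obstacle to be precisely organizing this truncation/comparison step rigorously (and checking that $w$ and its $\xi$-derivatives are controlled on the relevant $\xi$-range and $s$-range), rather than the differential inequality itself, which is a direct calculation once $K$ is chosen as above.
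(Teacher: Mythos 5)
Your proposal is correct and follows essentially the same route as the paper's proof: the same computation of the parabolic operator applied to $w(t/\ep,\,g(x)-Kt)$, the same use of \eqref{fep} together with Lemma \ref{LE2} (iii)--(iv) to choose $K$ of order $C(a)\bigl(\|\n g\|_\infty^2+\|\Delta g\|_\infty\bigr)$, and the same treatment of the truncation by verifying the inequality only where $\underline u>0$. The only (harmless) difference is that you spell out explicitly the max-of-subsolutions step and the admissible range of $\xi$, which the paper leaves implicit.
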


\begin{proof}
Let us first notice that
\begin{equation*}
\underline{u}(0,x)=g(x)\leq u^\ep (0,x)\,,\;\;\forall x\in\R^N\,.
\end{equation*}
Then we shall show that $\underline{u}$ is a sub-solution of
Problem $\Pe$. Note that if $x\notin \Omega_0$ then $g(x)=0$ and
$\underline{u}(t,x)=0$. Let us consider the operator
\begin{equation*}
\mathcal L^\ep [v]:=\partial _t v-\ep\Delta
v-\frac{1}{\ep}v(1-v)\,.
\end{equation*}
Let $a>0$ be arbitrary. We show below that, if $K>0$ is
sufficiently large then, for all $\ep >0$ small enough, $\mathcal
L^\ep [\underline{u}]\leq 0$ in the support of $\underline{u}$. In
this support we have
$$
\begin{array}{ll}
\partial_t \underline{u}=\frac{1}{\ep} w_s-K w_\xi\vsp\\
\Delta \underline{u}= w_{\xi\xi} |\nabla g|^2+w_\xi \Delta g\,.
\end{array}
$$
Then we get
\begin{equation*}
\begin{array}{lll}
\mathcal L^\ep [\underline{u}]&=\di \frac{1}{\ep} w_s-K
w_\xi-\ep\left( w_{\xi\xi} |\nabla g|^2+w_\xi
\Delta g\right)-\frac{1}{\ep} w(1-w)\vsp\\
&\leq\di \frac 1 \ep w_s -K
w_\xi-\ep\left( w_{\xi\xi} |\nabla g|^2+w_\xi \Delta g\right)-\frac 1 \ep \bar f _\ep (w)\vsp\\
&=-w_\xi\left [K+\ep\left( \frac{w_{\xi\xi}}{w_\xi} |\nabla g|^2+
\Delta g\right)\right]\,,
\end{array}
\end{equation*}
where we have successively used \eqref{fep} and \eqref{ode}. In
view of Lemma \ref{LE2} $(v)$, there exists a constant $C(a)>0$
such that, for all $(t,x)$ in the support of $\underline{u}$ with
$0\leq t \leq a\ep|\ln\ep|$, we have
\begin{equation*}
\left\lvert\frac{w_{\xi\xi}}{w_\xi} |\nabla g|^2+ \Delta
g\right\rvert \leq \frac{C(a)}{\ep}\,.
\end{equation*}
Therefore, choosing $K>C(a)$ implies
\begin{equation*}
\mathcal L^\ep [\underline{u}]\leq -w_\xi \left(K-C(a)\right)\leq
0\,,
\end{equation*}
since $w_\xi >0$.

This completes the proof of Lemma \ref{LE2.1}.
\end{proof}

Next we prove the following result.
\begin{lem}[Super-solutions]\label{LE2.2}
Let Assumptions \ref{H1} and \ref{H2} be satisfied. Then there
exists a constant $K_0>0$ such that, for all $\widehat{K}\geq
K_0$, the following holds. For all $x_0\in\Gamma
_0=\partial\Omega_0$,
 for all $\ep >0$ small enough, we have
\begin{equation*}
u^\ep (t,x)\leq \widehat{K} U^*\left(\frac{(x-x_0)\cdot
n_0-c^*t}{\ep}\right)\quad \text{ for all } (t,x) \in [0,\infty)
\times \R ^N\,,
\end{equation*}
wherein $n_0$ is the outward normal vector to $\Gamma
_0=\partial\Omega_0$ at $x_0$.
\end{lem}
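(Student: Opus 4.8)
\textbf{Proof plan for Lemma~\ref{LE2.2}.}
The plan is to build an explicit one-dimensional super-solution out of the minimal-speed travelling wave $U^*$ and then invoke the comparison principle for $\Pe$. Because $\Omega_0$ is convex, at any $x_0\in\Gamma_0$ the half-space $\{x:(x-x_0)\cdot n_0\le 0\}$ contains $\Omega_0$; in other words $(x-x_0)\cdot n_0\le 0$ on $\mathrm{supp}\,g$, and more generally the function $x\mapsto (x-x_0)\cdot n_0$ underestimates the signed distance to $\Gamma_0$. I would first fix $x_0$ and set $\xi:=(x-x_0)\cdot n_0$, and define
\[
\overline{u}(t,x):=\widehat{K}\,U^*\!\left(\frac{\xi-c^*t}{\ep}\right).
\]
The first step is the initial comparison $u^\ep(0,x)=u_{0,\ep}(x)=g(x)+h_\ep(x)\le \overline{u}(0,x)$ for all $x\in\R^N$. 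Here I split into the region $\{\xi\le -1\ep\}$ (say $\xi/\ep\le -1$), where $U^*\ge 1-Ce^{-\mu|\xi|/\ep}$ is bounded below by a positive constant, so choosing $\widehat K$ large (independently of $\ep$) handles the bounded contribution $g+h_\ep$; and the region $\{\xi/\ep\ge -1\}$, i.e.\ $\xi\ge-\ep$, which for $\ep$ small forces $x$ to lie outside (or within $\ep$ of) $\Omega_0$, so $g(x)=0$ there, while $h_\ep(x)\le Me^{-\lambda\|x\|/\ep}$ must be dominated by $\widehat K U^*(\xi/\ep)\sim \widehat K\gamma^-(\xi/\ep)e^{-\xi/\ep}$; since $\lambda\ge 1$ and $\|x\|\ge |\xi|$ is not quite automatic, I would instead use $\|x-x_0\|\ge \xi$ together with the boundedness of $\Omega_0$ (so $\|x\|\ge \xi-\mathrm{diam}\,\Omega_0\ge\xi-C_1$) to compare the two exponentials; the extra constant $e^{\lambda C_1/\ep}$ is problematic, so more carefully I would only need the comparison for $\xi$ in a bounded range near $0$ and use that $U^*(\xi/\ep)$ stays bounded below by a polynomial-times-exponential while $h_\ep$ decays like $e^{-\lambda\|x\|/\ep}$ with $\|x\|$ bounded below linearly in $\xi$. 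This matching of exponential rates, using $\lambda\ge 1$ and \eqref{behaviour}, is where I expect the real work to be.

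The second step is to verify that $\overline{u}$ is a super-solution of $\Pe$, i.e.\ $\mathcal L^\ep[\overline u]\ge 0$. Writing $z=(\xi-c^*t)/\ep$ and using $|\nabla\xi|=1$, $\Delta\xi=0$, a direct computation gives
\[
\partial_t\overline u=-\frac{c^*}{\ep}\widehat K U^{*\prime}(z),\qquad
\ep\Delta\overline u=\frac{\widehat K}{\ep}U^{*\prime\prime}(z),
\]
so that, using the travelling-wave equation \eqref{tw} for $U^*$,
\[
\mathcal L^\ep[\overline u]
=\frac{\widehat K}{\ep}\Bigl(-c^*U^{*\prime}(z)-U^{*\prime\prime}(z)\Bigr)-\frac1\ep\,\widehat K U^*(z)\bigl(1-\widehat K U^*(z)\bigr)
=\frac{\widehat K}{\ep}U^*(z)\Bigl(\widehat K U^*(z)-U^*(z)\Bigr),
\]
which, since $U^*\ge 0$ and $\widehat K\ge 1$, is $\ge 0$. (More precisely one gets $\frac{\widehat K}{\ep}U^*(\widehat K-1)U^*\ge0$.) This uses in an essential way that $U^*$ with the \emph{minimal} speed is nonnegative, so $U^*(z)\ge 0$ for all $z$, and that the nonlinearity satisfies $f(\widehat K v)\le \widehat K f(v)$ for $v\ge 0$, $\widehat K\ge1$ — indeed $f(u)=u(1-u)$ gives $\widehat K f(v)-f(\widehat K v)=\widehat K v^2(\widehat K-1)\ge0$. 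So no largeness of $\widehat K$ is needed for the super-solution inequality; only $\widehat K\ge 1$, which is why $K_0\ge 1$ and the lemma asks for $\widehat K\ge K_0$.

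The third step is to conclude: $\overline u$ and $u^\ep$ are respectively a super- and a sub-solution of the same equation on $(0,\infty)\times\R^N$ with ordered initial data, both bounded, so the comparison principle yields $u^\ep(t,x)\le\overline u(t,x)=\widehat K U^*((\xi-c^*t)/\ep)$ for all $t\ge0$, $x\in\R^N$, which is exactly the claimed inequality with $\xi=(x-x_0)\cdot n_0$. Taking $K_0:=\max\{1,\widehat K_{\mathrm{init}}\}$ where $\widehat K_{\mathrm{init}}$ is the constant produced in step one (note it can be chosen uniform in $x_0\in\Gamma_0$ and in small $\ep$, since $\Omega_0$ is compact and $\|g\|_\infty, M, \lambda$ are fixed) completes the proof. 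The main obstacle, as indicated, is the uniform-in-$x_0$ initial comparison in the "outer'' region: one must exploit $\lambda\ge 1$ to beat the slow decay $U^*(z)\asymp z e^{-z}$ of the minimal wave, and the convexity of $\Omega_0$ to guarantee that moving away from $\Gamma_0$ along $n_0$ really does send $\|x\|$ to infinity at least as fast as $\xi$; handling the transition zone $\xi=O(\ep)$, where $g$ may still be nonzero but small by Assumption~\ref{H3}, requires a little care but no new idea beyond the boundedness of $g$.
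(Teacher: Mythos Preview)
Your overall architecture matches the paper's exactly: define $\overline u(t,x)=\widehat K\,U^*\!\bigl((\xi-c^*t)/\ep\bigr)$ with $\xi=(x-x_0)\cdot n_0$, check $\mathcal L^\ep[\overline u]=\ep^{-1}\widehat K(\widehat K-1){U^*}^2\ge 0$, then compare initial data. Your super-solution computation is correct and is precisely the paper's.

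The genuine gap is in your step one, and you have correctly located it yourself: in the outer region you need to dominate $h_\ep(x)\le Me^{-\lambda\|x\|/\ep}$ by $\widehat K\,U^*(\xi/\ep)$, and for this you need $\xi\le\|x\|$. Your attempt via $\|x-x_0\|\ge\xi$ and the triangle inequality produces the fatal factor $e^{\lambda\,\mathrm{diam}(\Omega_0)/\ep}$. The missing idea is Assumption~\ref{H1}(ii): \emph{$0\in\Omega_0$}. By convexity the supporting hyperplane at $x_0$ leaves $0$ on the inside, i.e.\ $(0-x_0)\cdot n_0\le 0$, so $x_0\cdot n_0\ge 0$; hence for every $x$,
\[
\xi=(x-x_0)\cdot n_0=x\cdot n_0-x_0\cdot n_0\le x\cdot n_0\le\|x\|.
\]
With this in hand, and using that $\lambda\ge 1$ together with \eqref{behaviour} gives $U^*(z)\ge m^- e^{-\lambda z}$ for all $z\ge 0$, one obtains $h_\ep(x)\le (M/m^-)\,U^*(\xi/\ep)$ uniformly, with no $\ep$-dependent constant.

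A secondary simplification: split at $\xi=0$ rather than at $\xi=-\ep$. On $\{\xi\le 0\}$ the monotonicity of $U^*$ gives $U^*(\xi/\ep)\ge U^*(0)$, so $u_{0,\ep}(x)\le\|g\|_\infty+M\le\bigl[(\|g\|_\infty+M)/U^*(0)\bigr]U^*(\xi/\ep)$, and you never meet a transition zone where $g$ might be nonzero. On $\{\xi>0\}$ convexity gives $x\notin\Omega_0$ so $g(x)=0$ outright. The paper then takes $K_0=\max\bigl(1,\ M/m^-,\ (\|g\|_\infty+M)/U^*(0)\bigr)$.
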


\begin{proof} We recall that $\lambda$ and $M$ were defined in
Assumption \ref{H1} $(iii)$ and that, due to (\ref{behaviour}),
there exist two constants $0<\gamma^-<\gamma^-$ such that
\begin{equation*}
\gamma^- ze^{- z}\leq U^*(z)\leq \gamma^+ ze^{-z}\,,\;\;\forall
z\geq 1\,.
\end{equation*}
Thus since $\lambda\geq 1$, there exists some constant $m^->0$
such that
\begin{equation*}
U^*(z)\geq m^- e^{-\lambda z}\,,\;\;\forall z\geq 0\,.
\end{equation*}
Then we define
\begin{equation*}
K_0:= \max\left(1, \frac{M}{m^-},\;
\frac{\|g\|_\infty+M}{U^*(0)}\right)\,.
\end{equation*}

Next, let $\widehat{K}\geq K_0$ and $x_0\in\Gamma
_0=\partial\Omega_0$ be given. We consider the map
\begin{equation*}
u_* ^+(t,x):=\widehat{K} U^*\left(\frac{(x-x_0)\cdot
n_0-c^*t}{\ep}\right)\,.
\end{equation*}
Straightforward computations yield
\begin{equation*}
\ep \mathcal L^\ep [u_* ^+]=\widehat{K}(\widehat{K}-1)U^{*2}\,,
\end{equation*}
and therefore $\mathcal L^\ep [u_* ^+]\geq 0$ in $\mathbb
(0,\infty)\times \R ^N$. Hence, by the comparison principle, to
complete the proof of the lemma it is enough to prove
\begin{equation}\label{but}
u_{0,\ep}(x)\leq u_* ^+(0,x)=\widehat KU^*\left(\frac{(x-x_0)\cdot
n_0}{\ep}\right)\,,\;\;\forall x\in\R^N\,.
\end{equation}

First, assume that $x$ in the half plane $\{y\in\R^N:
\;\;(y-x_0)\cdot n_0\leq 0\}$. Then since $U^*$ is decreasing we
have
\begin{equation*}
U^*\left(\frac{(x-x_0)\cdot n_0}{\ep}\right)\geq U^*(0)\,.
\end{equation*}
Therefore we obtain that
\begin{equation*}
u_{0,\ep}(x)\leq \frac{\|g\|_\infty+M}{U^*(0)}
U^*\left(\frac{(x-x_0)\cdot n_0}{\ep}\right)\,,
\end{equation*}
which, thanks to the choice of $K_0$, yields $u_{0,\ep}(x)\leq u_*
^+(0,x)$.

Now, we assume that $x$ is in the half plane $\{y\in\R^N:
\;\;(y-x_0)\cdot n_0> 0\}$. Since $\Omega_0$ is convex, we have
$x\notin\Omega_0$. Thus $g(x)=0$ and
$$
u_{0,\ep}(x)=h_\ep(x)\leq M e^{-\lambda\frac{\|x\|}{\ep}}\,.
$$
Moreover since $0\in\Omega_0$ we have, for $x$ such that
$(x-x_0)\cdot n_0>0$,
\begin{equation*}
(x-x_0)\cdot n_0\leq \|x\|\,.
\end{equation*}
Finally since $U^*$ is decreasing, we obtain
\begin{equation*}
m^-e^{-\lambda \frac{\|x\|}{\ep}}\leq
U^*\left(\frac{\|x\|}{\ep}\right)\leq U^*\left(\frac{(x-x_0)\cdot
n_0}{\ep}\right)\,.
\end{equation*}
Therefore we get
\begin{equation*}
u_{0,\ep}(x)\leq \frac{M}{m^-}U^*\left(\frac{(x-x_0)\cdot
n_0}{\ep}\right)\,,
\end{equation*}
which, thanks to the choice of $K_0$, yields $u_{0,\ep}(x)\leq u_*
^+(0,x)$.

This completes the proof of Lemma \ref{LE2.2}.
\end{proof}

\vsp We now complete the proof of Theorem \ref{th:gene}.
\begin{proof}
The proof of $(i)$ directly follows from Lemma \ref{LE2} $(v)$
together with Lemma \ref{LE2.1}. In order to prove $(ii)$ let us
notice that the map
\begin{equation*}
\overline{u}(t,x):=w\left(\frac{t}{\ep},
\|g\|_\infty+M\right)\,,\;\;t\geq 0\,,
\end{equation*}
satisfies
\begin{equation*}
\begin{split}
&u^\ep (0,x)\leq \|g\|_\infty+M= \overline{u} (0,x) \quad\text{ for all } x\in \R ^N\,,\\
&\mathcal L^\ep [\overline u]=0 \quad\text{ in }\mathbb
(0,\infty)\times \R ^N\,.
\end{split}
\end{equation*}
Thus we obtain that
\begin{equation*}
u^\ep (t,x)\leq w\left(\frac{t}{\ep},
\|g\|_\infty+M\right)\,,\;\;\forall t\geq 0\,,\;\forall
x\in\R^N\,.
\end{equation*}
Thus Lemma \ref{LE2} $(v)$ applies and completes the proof of
$(ii)$.

Finally, under the additional Assumption \ref{H2}, the last point
of Theorem \ref{th:gene} follows from Lemma \ref{LE2.2}. Indeed
from this lemma we know that there exists $\widehat{K}>1$ such
that, for each $x_0\in\partial\Omega_0$, we have
\begin{equation*}
u^\ep(t,x)\leq \widehat{K}U^*\left(\frac{(x-x_0)\cdot
n_0-c^*t}{\ep}\right)\quad \text{ for all $t\geq 0$, all
$x\in\R^N$}\,.
\end{equation*}
Let $x\in\R^N$ be given and choose $x_0\in\partial \Omega_0$ as
the projection of $x$ on the convex $\Omega_0$. For such a choice
we have
\begin{equation*}
(x-x_0)\cdot n_0=d^*(0,x)\,,
\end{equation*}
and the result follows.

This completes the proof of Theorem \ref{th:gene}.
\end{proof}

\section{Motion of interface}\label{s:motion}

We have proved in the previous section that, as $\ep\to 0$, the
solution $\ue$ develops, after a very short time $t^\ep=\mathcal O
(\ep|\ln \ep|)$, steep transition layers that separate the region
where $\{\ue\approx 0\}$ from the one where $\{\ue\approx 1\}$. It
is the goal of this section to study the motion of interface that
then occurs in a much slower time range. Since Lemma \ref{LE2.2}
controls $\ue$ from above for all $t\geq 0$, this will be enough
to construct sub-solutions.

\vsp For all $c\in(0,c^*)$, we define $U$ as in \eqref{tw-sign}
and $V$ by
$$
V(z):=\begin{cases} \, U(z)  \quad &\textrm{ if } z< 0\\
\, 0\quad  \quad &\textrm{ if }  z\geq 0\,.
\end{cases}
$$
Next, we put
\begin{equation}\label{sub-sol}
u^- _c(t,x):=(1-\ep)V\left(\frac{d(t,x)+\ep|\ln \ep|m_1e^{m_2
t}}\ep\right),
\end{equation}
where $d$ denotes the cut-off signed distance function to the
solution of the free boundary problem $(P^c)$, as defined in
subsection \ref{ss:distance}.

\begin{lem}[Ordering initial data]\label{condition-initiale} Let Assumptions \ref{H1} and \ref{H3} be satisfied. Then there exists
$\tilde m _1 >0$ such that for all $c\in(0,c^*)$, all $m_1\geq
\tilde m _1$, all $m_2>0$, all $\ep>0$ small enough, we have
\begin{equation}
u^-_c(0,x)\leq\ue(t^\ep,x)\,,
\end{equation}
for all $x\in \R ^N$.
\end{lem}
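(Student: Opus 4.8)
The plan is to compare the sub-solution candidate $u^-_c(0,x) = (1-\ep)V\!\left(\frac{d(0,x)+\ep|\ln\ep|m_1}{\ep}\right)$ with the lower bound on $\ue(t^\ep,\cdot)$ coming from the generation step, namely Theorem \ref{th:gene}(i), which says $\ue(t^\ep,x)\geq 1-\ep$ whenever $g(x)\geq k\ep|\ln\ep|$. I would split $\R^N$ into three regions according to the position of $x$ relative to $\Gamma_0 = \partial\Omega_0$: (a) the ``deep interior'' $\{x \in \Omega_0 : g(x) \geq k\ep|\ln\ep|\}$, (b) a thin layer near $\Gamma_0$ where $g(x) < k\ep|\ln\ep|$ (which, by Assumption \ref{H3}, the nondegeneracy of $\partial g/\partial n$ on $\Gamma_0$, is contained in an $\mathcal O(\ep|\ln\ep|)$-neighborhood of $\Gamma_0$), and (c) the exterior region, which up to the same thin layer is at distance $\geq$ const$\cdot\ep|\ln\ep|$ outside $\Omega_0$.

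In region (a), since $d(0,x) \leq 0$ there we have $V \leq U \leq 1$, so $u^-_c(0,x) \leq 1-\ep \leq \ue(t^\ep,x)$ by Theorem \ref{th:gene}(i); this region is trivial. In region (c), I want the argument of $V$ to be nonnegative so that $V = 0$ and the inequality $0 = u^-_c(0,x) \leq \ue(t^\ep,x)$ holds automatically (solutions of $\Pe$ stay nonnegative). This forces the choice of $\tilde m_1$: I need $d(0,x) + \ep|\ln\ep|m_1 \geq 0$, i.e. $\mbox{dist}(x,\Gamma_0) \leq \ep|\ln\ep|m_1$ — wait, that is the wrong sign. Rather, in the exterior $d(0,x) = \mbox{dist}(x,\Gamma_0) > 0$, so the argument is automatically positive and $V=0$; the only delicate part is the thin layer (b) straddling $\Gamma_0$. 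There, using Assumption \ref{H3}: the $C^2$ function $g$ vanishes on $\Gamma_0$ with $|\partial g/\partial n| \geq \delta$, so a Taylor/mean-value estimate gives, for $x$ inside $\Omega_0$ near $\Gamma_0$, $g(x) \geq \delta'\,\mbox{dist}(x,\Gamma_0)$ for some $\delta'>0$, hence $g(x) \geq k\ep|\ln\ep|$ as soon as $\mbox{dist}(x,\Gamma_0) \geq (k/\delta')\ep|\ln\ep|$. Choosing $\tilde m_1 := k/\delta'$ (or larger) ensures: if $x \in \Omega_0$ with $-d(0,x) = \mbox{dist}(x,\Gamma_0) \geq \tilde m_1 \ep|\ln\ep|$ then $g(x)\geq k\ep|\ln\ep|$ and we are back in case (a); while if $-d(0,x) < \tilde m_1\ep|\ln\ep|$ then $d(0,x) + \ep|\ln\ep|m_1 \geq (m_1 - \tilde m_1)\ep|\ln\ep| \geq 0$ for $m_1 \geq \tilde m_1$, so $V = 0$ and the inequality is again immediate. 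Exterior points have $d(0,x) \geq 0$ so $V=0$ there too.

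The main obstacle — really the only substantive point — is making the layer estimate $g(x) \geq \delta'\,\mbox{dist}(x,\Gamma_0)$ rigorous and uniform: one must use that $\Gamma_0$ is a compact $C^2$ hypersurface so that the signed distance $\tilde d$ is $C^2$ in a fixed tubular neighborhood, that $\nabla g = (\partial g/\partial n)\,\nabla\tilde d$ on $\Gamma_0$ (since $g$ vanishes identically on $\Gamma_0$, its tangential derivative vanishes), and then a first-order Taylor expansion of $g$ in the normal variable with the $C^2$ bound on $g$ controlling the remainder; the lower bound $|\partial g/\partial n| \geq \delta$ and the sign of $g$ (nonnegative, positive inside) pin down the correct sign. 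Everything else is bookkeeping: note $m_2$ plays no role at $t=0$ since $e^{m_2\cdot 0}=1$, which is why the statement is uniform in $m_2>0$, and the smallness of $\ep$ is used to guarantee that the $\mathcal O(\ep|\ln\ep|)$-layer lies within the tubular neighborhood where $d = \tilde d$ and the Taylor estimate is valid. This completes the proof.
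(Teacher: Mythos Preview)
Your proof is correct and follows essentially the same approach as the paper's: split according to whether the argument of $V$ is nonnegative (trivial since $V=0$ there and $u^\ep\geq 0$) or negative, and in the latter case use Assumption~\ref{H3} via a mean-value/Taylor estimate to deduce $g(x)\geq k\ep|\ln\ep|$ and then invoke Theorem~\ref{th:gene}(i) together with $V\leq 1$. The paper presents this as a direct two-case split on the sign of $d(0,x)+m_1\ep|\ln\ep|$, whereas you initially frame it as three regions before collapsing to the same dichotomy; the substance is identical.
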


\begin{proof}What we have to show is
\begin{equation}\label{goal}
u^-_c(0,x)=(1-\ep)V\left(\frac{d(0,x)+m_1\ep|\ln
\ep|}\ep\right)\leq \ue(t^\ep,x)\,.
\end{equation}
If $x$ is such that $d(0,x)\geq -m_1\ep |\ln\ep|$ then this is
obvious since the definition of $V$ implies $u^-_c(0,x)=0$. Next
assume that $x$ is such that $d(0,x)< -m_1\ep |\ln\ep|$. Note
that, in view of hypothesis \eqref{pente}, the mean value theorem
provides the existence of a constant $\tilde m _1>0$ such that
\begin{equation}
\text{ if } \quad d(0,x)\leq -\tilde m _1 \ep|\ln\ep| \quad \text{
then } \quad g(x)\geq k \ep|\ln\ep|\,,
\end{equation}
where $k$ is as in Theorem \ref{th:gene}. If we choose $m_1 \geq
\tilde m _1$ and $m_2>0$, then inequality \eqref{goal} follows
from Theorem \ref{th:gene} $(i)$ and the fact that $V\leq 1$.
\end{proof}

\begin{lem}[Sub-solutions for the motion]\label{super-sub-motion}
Choose $\eta >0$ such that the constants $C>0$, $\mu>0$, $d_0>0$,
$N>0$, that appear in \eqref{estimate-tw}, \eqref{norme-un},
\eqref{MVT}, \eqref{est-dist} are independent of
$c\in[c^*-\eta,c^*)$.

Then there exists $\tilde m _2 >0$ such that for all
$c\in[c^*-\eta,c^*)$, all $m_1\geq \tilde m _1$, all $m_2\geq
\tilde m _2$, all $\ep>0$ small enough, we have
\begin{equation}
\mathcal L ^\ep [u^- _c]:=\partial _t u^-_c-\ep \Delta u^-_c -\eun
u^-_c(1-u^-_c)\leq 0 \quad \text{ in } (0,\infty)\times \R^N\,.
\end{equation}
\end{lem}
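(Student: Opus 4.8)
The plan is to compute $\mathcal L^\ep[u^-_c]$ directly using the structure of $u^-_c = (1-\ep) V(z)$ with $z = (d(t,x) + \ep|\ln\ep| m_1 e^{m_2 t})/\ep$, and to exploit the fact that $V = U$ for $z < 0$ (where $U$ solves the travelling-wave ODE \eqref{tw-sign}) while $V \equiv 0$ for $z \geq 0$. First I would observe that on the open set $\{z > 0\}$ we have $u^-_c \equiv 0$ and $\mathcal L^\ep[u^-_c] = 0$ trivially, so the whole matter reduces to the region $\{z < 0\}$, i.e.\ where $d(t,x) < -\ep|\ln\ep| m_1 e^{m_2 t} < 0$. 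In particular this forces $x$ to lie strictly inside $\Omega^c_t$ at a definite distance from $\Gamma^c_t$; since that distance is at least $\ep|\ln\ep| m_1 e^{m_2 t}$ which, for $\ep$ small, is still small, we may assume $|d(t,x)| < d_0$, so that the clean identities $|\nabla d| = 1$ from \eqref{norme-un} and $|\partial_t d + c| \leq N|d|$ from \eqref{MVT} are available, along with $|\Delta d| \leq C$ from \eqref{est-dist}. (One should also handle the boundary $\{z = 0\}$, where $V$ is only $C^1$: since $U(0) = 0$, $U'(0) < 0$, the function $V$ is $C^1$ with a corner in the second derivative, so the sub-solution inequality there is checked in the viscosity/distributional sense — $V''$ has the right sign of a downward jump — which is standard and I would mention it briefly.)

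Next, on $\{z<0\}$ I would carry out the chain-rule computation: $\partial_t u^-_c = (1-\ep) U'(z)\,\partial_t z$ with $\partial_t z = \ep^{-1}(\partial_t d + |\ln\ep| m_1 m_2 e^{m_2 t})$ (recall $\partial_t(\ep|\ln\ep| m_1 e^{m_2 t}) = \ep|\ln\ep| m_1 m_2 e^{m_2 t}$, so dividing by $\ep$ gives $|\ln\ep| m_1 m_2 e^{m_2 t}$), and $\Delta u^-_c = (1-\ep)\big(\ep^{-2} U''(z)|\nabla d|^2 + \ep^{-1} U'(z)\Delta d\big) = (1-\ep)\big(\ep^{-2}U''(z) + \ep^{-1}U'(z)\Delta d\big)$ using $|\nabla d|^2 = 1$. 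Plugging into $\mathcal L^\ep[u^-_c]$ and using the travelling-wave equation $U'' = -cU' - U(1-U)$ to cancel the $\ep^{-1}U''(z)$ term against part of the reaction term, the leading-order terms organize so that the $\ep^{-1}$-order contribution is $(1-\ep)U'(z)\,\ep^{-1}(\partial_t d + c)$ — which by \eqref{MVT} is controlled by $\ep^{-1} N|d|\,|U'(z)|$ — plus the genuinely negative driving term $(1-\ep)U'(z)\,|\ln\ep| m_1 m_2 e^{m_2 t}$ coming from the exponential shift, which is negative because $U' < 0$. The remaining terms are: $-\ep \cdot \ep^{-1}(1-\ep)U'(z)\Delta d = -(1-\ep)U'(z)\Delta d$, of order $O(1)$ and sign-controlled via \eqref{est-dist}; and a reaction remainder of the form $\ep^{-1}(1-\ep)\big[\,\overline{f}\text{-type error from }(1-\ep)V\text{ vs }V\,\big]$, i.e.\ the discrepancy between $\ep^{-1}(1-\ep)V(1-(1-\ep)V)$ and $\ep^{-1}(1-\ep)V(1-V)$, which equals $\ep^{-1}(1-\ep)V\cdot(-\ep)(1-V)\cdot(\text{something}) = -(1-\ep)V(1-V)\cdot(\ldots)$, hence of order $O(1)$ and of the \emph{favorable} (negative) sign since $0 \le V \le 1$.

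The key mechanism is thus: the $\ep^{-1}$-scale terms consist of one term of uncontrolled sign bounded by $C\,\ep^{-1}|d|\,|U'(z)|$ and one strictly negative term $-(1-\ep)|U'(z)|\,|\ln\ep| m_1 m_2 e^{m_2 t}$; since on $\{z<0\}$ we have $|d| \leq \ep|z| + \ep|\ln\ep| m_1 e^{m_2 t}$... wait, more precisely $|d(t,x)| = \ep|z| \cdot$ — actually $d = \ep z - \ep|\ln\ep|m_1 e^{m_2 t}$, so $|d| \le \ep|z| + \ep|\ln\ep|m_1 e^{m_2 t}$, and using the exponential decay $|U'(z)| \le C e^{-\mu|z|}$ from \eqref{estimate-tw} together with the elementary bound $|z| e^{-\mu|z|} \le C_\mu$, the term $\ep^{-1} N|d|\,|U'(z)|$ is bounded by $N(C_\mu/\mu + |\ln\ep| m_1 e^{m_2 t})|U'(z)| \cdot$ — no, cleaner: $\ep^{-1}N|d| |U'(z)| \le N|z||U'(z)| + N|\ln\ep|m_1 e^{m_2 t}|U'(z)|$. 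The first piece is $O(1)$ (bounded since $|z||U'(z)|$ is), the second is $\le N|\ln\ep|m_1 e^{m_2 t}|U'(z)|$, which is precisely \emph{dominated} by the negative driving term once $m_2 > N$ (since that term has coefficient $m_1 m_2$ against $m_1 N$, and both carry the same factor $|\ln\ep| e^{m_2 t} |U'(z)|$). So choosing $m_2 \ge \tilde m_2$ with $\tilde m_2 > N$ makes the net $\ep^{-1}|\ln\ep|$-scale contribution $\le 0$, and in fact it produces a surplus of size $\sim |\ln\ep| m_1 (m_2 - N) e^{m_2 t}|U'(z)|$ which, being at least of order $|\ln\ep|$, dominates all the remaining $O(1)$ terms ($-U'(z)\Delta d$, the reaction remainder, and the $O(1)$ piece of the $|d||U'|$ bound) for $\ep$ small enough, using once more $|U'(z)|$ appearing as a common factor in the surplus while the $O(1)$ terms are bounded by a constant times $|U'(z)| + |U(z)|(1-U(z)) + \ldots \le C|U'(z)| + C e^{-\mu|z|}$. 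The one point needing a little care here is absorbing the reaction-remainder term $-(1-\ep)V(1-V)(\ldots)$, which does \emph{not} carry a factor $|U'(z)|$ but rather $V(1-V) \sim U(z)(1-U(z))$ — however this term has the \emph{good} sign, so it only helps. Hence $\mathcal L^\ep[u^-_c] \le 0$.

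The main obstacle I anticipate is bookkeeping the competition at the $\ep^{-1}|\ln\ep|$ scale cleanly — making sure the favorable exponential-shift term truly dominates the $N|d|$ error \emph{uniformly in} $z < 0$ and \emph{uniformly in} $c \in [c^*-\eta, c^*)$. The uniformity in $c$ is exactly why $\eta$ was fixed at the start of the lemma so that $C, \mu, d_0, N$ don't depend on $c$; combined with the continuous dependence of these constants on $c$ noted after \eqref{estimate-tw} and after \eqref{est-dist}, this is what lets a single $\tilde m_2$ work for the whole range. A secondary technical point is the non-smoothness of $V$ at $z = 0$, handled as indicated above by noting $V$ is continuous, $C^1$ except a concave corner, so it is a sub-solution in the appropriate generalized sense (or, equivalently, one regularizes $V$ from below by a smooth function agreeing with it outside an $\ep$-neighborhood of $z=0$ and passes to the limit).
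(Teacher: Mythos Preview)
Your computation of the derivatives, the use of the travelling-wave ODE, and the identification of the driving term $(1-\ep)U'(z)\,|\ln\ep|\,m_1m_2e^{m_2t}$ and of the reaction remainder $-(1-\ep)U^2(z)$ are all correct. However, there are two related gaps, both concerning the region \emph{deep inside} $\Omega^c_t$.

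First, the claim that on $\{z<0\}$ one may assume $|d(t,x)|<d_0$ is false: the condition $z<0$ yields only a \emph{lower} bound $|d(t,x)|>\ep|\ln\ep|\,m_1e^{m_2t}$, not an upper bound. Since $d$ is a cut-off distance, $|d|$ ranges up to $2d_0$, and one cannot take $|\nabla d|=1$ throughout $\{z<0\}$; you have dropped the term $E_2:=U''(z)\bigl(1-|\nabla d|^2\bigr)$.

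Second, and more seriously, your absorption mechanism fails for large $|z|$. You bound $\ep^{-1}N|d|\,|U'(z)|$ by $N|z|\,|U'(z)|+N|\ln\ep|\,m_1e^{m_2t}|U'(z)|$ and then assert that the surplus from the driving term, of order $|\ln\ep|\cdot|U'(z)|$, dominates the ``$O(1)$'' piece $N|z|\,|U'(z)|$. But $|z|$ ranges up to order $d_0/\ep$, so for $|z|\gg|\ln\ep|$ the ratio $\text{(surplus)}/\text{(error)}\sim|\ln\ep|/|z|\to 0$: the driving term does \emph{not} win there. The paper resolves both issues by a two-regime split. When $-\tfrac{m_2}{2N}\ep|\ln\ep|\,m_1e^{m_2t}\le d<-\ep|\ln\ep|\,m_1e^{m_2t}$ (so $|d|$ is small, $|\nabla d|=1$, and $|z|\lesssim|\ln\ep|$), your argument is essentially the right one and $E_1\le 0$, $E_2=0$. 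When $d<-\tfrac{m_2}{2N}\ep|\ln\ep|\,m_1e^{m_2t}$, one has $|\theta|\ge|\ln\ep|\,m_1\bigl(\tfrac{m_2}{2N}-1\bigr)$, so by \eqref{estimate-tw} both $|U'(\theta)|$ and $|U''(\theta)|$ are $O(\ep^{2})$ once $\tilde m_2$ is chosen so that $m_1\mu\bigl(\tfrac{m_2}{2N}-1\bigr)\ge 2$; meanwhile $U(\theta)\to 1$ gives $E_3=-\ep U^2(\theta)\le-b^2\ep$, which dominates $E_1+E_2=O(\ep^2)$. In short, the term you dismissed as merely ``of the good sign'' is in fact the \emph{essential} negative contribution in the deep-interior regime, and the case split is unavoidable.
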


\begin{proof} In the set $\{(t,x):\;\;d(t,x)\geq -\ep|\ln\ep|m_ 1 e^{m_2t}\}$ this is
obvious since $u^-_c(t,x)=0$.

We now work in the set $\{(t,x):\;\;d(t,x)< -\ep|\ln\ep|m_ 1
e^{m_2t}\}$. By using straightforward computations we get
$$
\begin{array}{lll}
\partial _t u^-_c=
(1-\ep)\left(\displaystyle{\frac{\partial _t d}{\ep}}+m_2\pt\right){U}'(\theta)\vsp \\
\Delta u^- _c= \displaystyle{\frac{|\nabla d|^2}{\ep ^2}}(1-\ep)
U''(\theta)+ \displaystyle{\frac{\Delta
d}{\ep}}(1-\ep){U}'(\theta)\vsp\\
u^-_c(1-u^-_c)=(1-\ep)U(\theta)(1-U(\theta))+\ep(1-\ep) U
^2(\theta)\,,
\end{array}
$$
where
\begin{equation}\label{theta}
\theta:=\frac{d(t,x)+\pt}\ep\,.
\end{equation}
Now, the ordinary differential equation $U''+cU'+U(1-U)=0$ yields
$$
\ep \mathcal L^\ep [u^- _c]=(1-\ep)(E_1+\cdots+E_3)\,,
$$
with
\vsp \\
$\qquad\quad  E_1:=U'(\theta)\left(\partial _t d+c+m_2\pt-\ep\Delta d\right)=:U'(\theta)F_1$\vsp \\
$\qquad\quad  E_2:=U''(\theta)(1-|\nabla d|^2)$\vsp \\
$\qquad\quad  E_3:=-\ep U^2(\theta)\,.$\vsp\\
We show below that the choice $\tilde m _2:=2N\di{(\frac 2{\tilde
m _1\mu}+1)}$ is enough to prove the lemma. To that purpose we
distinguish two cases, namely \eqref{case1} and \eqref{case2}.

\vsp First assume that
\begin{equation}\label{case1}
-\frac{m_2}{2N}\pt \leq d(t,x)<-\pt\,.
\end{equation}
It follows from \eqref{MVT} and \eqref{est-dist} that, for $\ep>0$
small enough,
$$
\begin{array}{lll}
F_1&\geq Nd+m_2\pt -\ep C\vsp \\
&\geq Nd +\frac{m_2}2\pt\vsp \\
&\geq 0\,,
\end{array}
$$
which implies that $E_1\leq 0$. Moreover, in view of
\eqref{norme-un}, we have, for $\ep>0$ small enough, $E_2=0$ and,
obviously, $E_3\leq 0$. Hence, $\mathcal L^\ep [u^-_c]\leq 0$.

\vsp Now assume that
\begin{equation}\label{case2}
d(t,x)<-\frac{m_2}{2N}\pt\,.
\end{equation}
This implies
\begin{equation}\label{theta-infini}
\theta\leq -|\ln\ep|m_1(\frac {m_2}{2N}-1)\,.
\end{equation}
Using \eqref{estimate-tw}, \eqref{MVT} and \eqref{est-dist} we see
that, for $\ep >0$ small enough,
$$
\begin{array}{lllll}
 E_1&\leq Ce^{-\mu|\theta|}(N|d|+\ep|\ln\ep|m_1m_2e^{m_2T}+\ep C)\vsp \\
 &\leq Ce^{-\mu|\theta|}(N2d_0+o(1))\vsp \\
&\leq \di{C_1 \ep ^ {m_1\mu(\frac{m_2}{2N}-1)}}\,,
 \end{array}
$$
where $C_1:=3CNd_0$. The choice of $\tilde m _2$ then forces
$E_1\leq C_1\ep ^2$. Using very similar arguments we see that
there exists $C_2>0$ such that $E_2\leq C_2\ep^2$. At least note
that $\theta \to -\infty$ as $\ep \to 0$. Hence, if $\ep>0$ is
small enough then $U(\theta)\geq b$ for some $b>0$, which in turn
implies $E_3\leq -  b^2\ep $. Collecting these estimates yields
$$
\mathcal L^\ep [u^-_c]\leq - b^2\ep+(C_1+C_2)\ep ^2 \leq 0\,, $$
for $\ep>0$ small enough.

The lemma is proved.
\end{proof}

\section{Proof of Theorem \ref{th:results}}\label{s:proof}

We are now ready to prove our main result which includes both
generation and motion of interface properties, but also provides
an $\mathcal O (\ep|\ln\ep|)$ estimate of the transition layers.
Roughly speaking, we will fit the sub- and super-solutions for the
generation into the ones for the motion.

\begin{proof} Let Assumptions \ref{H1}, \ref{H2} and \ref{H3} be
satisfied. Choose $k>0$, $\alpha >0$ and $\widehat K >1$ as in
Theorem \ref{th:gene}. As in Lemma \ref{super-sub-motion}, choose
$\eta >0$ such that the constants $C>0$, $\mu>0$, $d_0>0$, $N>0$,
that appear in \eqref{estimate-tw}, \eqref{norme-un}, \eqref{MVT},
\eqref{est-dist} are independent of $c\in[c^*-\eta,c^*)$.
According to Lemma \ref{condition-initiale}, Lemma
\ref{super-sub-motion} and the comparison principle, there exist
$m_1>0$, $m_2>0$ such that, for all $c\in[c^*-\eta,c^*]$, we have,
for $\ep >0$ small enough,
\begin{equation}\label{dessous}
u^- _c(t-t^\ep,x)\leq \ue(t,x)\,,
\end{equation}
for all $(t,x)\in[t^\ep,T]\times \R^N$. We choose $\mathcal C$
such that \begin{equation}\label{choice} \mathcal
C>\max\left(1,2(2T+m_1e^{m_2T}),\frac 2 \mu\right)\,.
\end{equation}

 Obviously, from Theorem
\ref{th:gene} $(ii)$, we have $\ue(t,x) \in[0,1+\ep]$ for all
$t^\ep \leq t\leq T$, all $x\in \R^N$.

Next we take $x\in (\R^N\setminus \overline{\Omega
^*_t})\setminus\mathcal N_{\mathcal C\ep|\ln\ep|}(\Gamma ^* _t)$,
i.e.
\begin{equation}\label{d-plus}
d^*(t,x)\geq \mathcal C \ep|\ln \ep|\,,
\end{equation}
 and prove below that $\ue(t,x)\leq  \ep$, for $t^\ep\leq t\leq
 T$. Since
 $$
 d^*(t,x)=d^*(0,x)-c^*t\,,
 $$
 we deduce from
 \eqref{super-sol}, the decrease of $U^*$ and
 \eqref{behaviour} that, for $\ep >0$ small enough, for $0\leq t\leq T$,
 $$
 \begin{array}{lll}
 \ue(t,x) &\leq \widehat K U^*(\mathcal C |\ln \ep|)\vsp\\
 &\leq \widehat K \gamma ^+ \mathcal C|\ln \ep| \ep ^{\mathcal C}\vsp \\
 &\leq \ep\,,
 \end{array}
 $$
 since $\mathcal C >1$.

At last we take $x\in\Omega ^* _ t\setminus\mathcal N_{\mathcal
C\ep|\ln\ep|}(\Gamma ^* _t)$, i.e.
\begin{equation}\label{d-moins}
d^*(t,x)\leq -\mathcal C \ep|\ln \ep|\,,
\end{equation}
 and prove below that $\ue(t,x)\geq 1-2\ep $, for $t^\ep\leq t\leq
 T$. Note that
\begin{equation}\label{lien}
 d(t-t^\ep,x)=d^*(t,x)+(c^*-c)t+ct^\ep\,.
\end{equation}
We define
\begin{equation}\label{cep}
c(\ep):=c^*- \ep|\ln\ep|\,.
\end{equation}
Combining \eqref{dessous}, with $c(\ep)$ playing the role of $c$,
and \eqref{lien}, we see that
 $$
 \ue(t,x) \geq (1-\ep)V\left(\di \frac{d^*(t,x)+ \ep|\ln\ep|t+ \ep|\ln\ep|t^\ep+\ep|\ln\ep|m_1e^{m_2
 t}}\ep\right)\,,
 $$
for $t^\ep\leq t\leq T$. In view of \eqref{d-moins}, the choice of
$\mathcal C$ in \eqref{choice} and \eqref{moins-infini}, we get,
for $\ep >0$ small enough,
$$
\begin{array}{lll}
 \ue(t,x) &\geq
(1-\ep)U\left(-\frac{\mathcal C}2|\ln\ep|\right)\vsp\\
&\geq (1-\ep)(1-Ce^{-\mu \frac{\mathcal C}2 |\ln\ep|})\vsp\\
&\geq 1-\ep-C\ep ^{\mu\frac {\mathcal C}2}\vsp\\
 &\geq 1-2\ep\,,
\end{array}
$$
since $\mathcal C >\di \frac 2 \mu$.

This completes the proof of Theorem \ref{th:results}.
\end{proof}

\section{When initial data are \lq\lq not so small"}\label{s:small}

In this section we investigate the singular limit of $\Pe$ when
initial data $u_{0,\ep}$ satisfy Assumption \ref{ass6}. We prove
below Theorem \ref{THEO6.1}.

\vsp We start with the following lemma. The proof is omitted since
it follows the same arguments as those used in Theorem
\ref{th:gene}.

\begin{lem}[Generation of interface]\label{LE6.2}
Let Assumption \ref{ass6} be satisfied. Then there exist some
constants $\alpha>0$ and $k>0$ such that, for all $\ep >0$ small
enough,
\begin{equation*}
u^\ep(t^\ep+t, x)\leq 1+\ep\,,\;\;\text{ for all } (t,x)\in
[0,\infty)\times \R^N\,,
\end{equation*}
\begin{equation*}
u^\ep(t^\ep, x)\geq 1-\ep\,,\;\;\text{ for all } x\in\R^N\,\text{
such that }\;u_{0,\ep}(x)\geq k\ep|\ln\ep|\,,
\end{equation*}
wherein $t^\ep:=\alpha \ep |\ln\ep|$.
\end{lem}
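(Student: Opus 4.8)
The strategy mirrors exactly the generation-of-interface argument already carried out in Theorem \ref{th:gene}, so the plan is to build a sub-solution and a super-solution from the modified ODE semiflow $w$ of subsection \ref{ss:ode} and apply the comparison principle. For the upper bound, I would set $\overline u(t,x):=w(t/\ep,\,M)$ where $M$ is the uniform upper bound on $u_{0,\ep}$ from Assumption \ref{ass6}. Since $\overline u$ solves the ODE $\partial_t\overline u=\frac1\ep\bar f_\ep(\overline u)$, and $\bar f_\ep(u)\le \bar f(u)=u(1-u)$ for $u\ge 0$ by \eqref{fep}, one checks $\mathcal L^\ep[\overline u]\ge 0$; since $u_{0,\ep}(x)\le M=\overline u(0,x)$, the comparison principle gives $\ue(t,x)\le w(t/\ep,M)$ for all $t\ge0$, $x\in\R^N$. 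Then Lemma \ref{LE2} $(v)$ applied with $\xi=M$ (which lies in $[\ep|\ln\ep|,\|g\|_\infty+M+1]$ for $\ep$ small, provided one enlarges the interval on which $w$ is defined to include $M$; here there is no $g$, so one simply redefines the admissible interval as $[-M-1,M+1]$) yields $w(s,M)\le 1+\ep$ for $s\ge\alpha\ep|\ln\ep|/\ep=\alpha|\ln\ep|$, i.e. $\ue(t^\ep+t,x)\le 1+\ep$ for all $t\ge0$.

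For the lower bound, I would introduce the sub-solution built on the spatially frozen, drifted semiflow, exactly as in Lemma \ref{LE2.1}: set $\underline u(t,x):=\max\{0,\,w(t/\ep,\,u_{0,\ep}(x)-Kt)\}$ for a constant $K>0$ to be chosen. Since $u_{0,\ep}$ is itself $C^2$ with $\|u_{0,\ep}\|_\infty+\|\nabla u_{0,\ep}\|_\infty+\|\Delta u_{0,\ep}\|_\infty\le C_0$ (this is precisely why Assumption \ref{ass6} includes the derivative bounds), the computation of Lemma \ref{LE2.1} goes through verbatim with $u_{0,\ep}$ in place of $g$: on the support of $\underline u$ one finds
\[
\mathcal L^\ep[\underline u]\le -w_\xi\Big[K+\ep\Big(\tfrac{w_{\xi\xi}}{w_\xi}|\nabla u_{0,\ep}|^2+\Delta u_{0,\ep}\Big)\Big],
\]
and Lemma \ref{LE2} $(iv)$ together with the uniform bound $C_0$ gives $|\tfrac{w_{\xi\xi}}{w_\xi}|\nabla u_{0,\ep}|^2+\Delta u_{0,\ep}|\le C(\alpha)/\ep$ on $0\le t\le \alpha\ep|\ln\ep|$; choosing $K>C(\alpha)$ makes $\mathcal L^\ep[\underline u]\le 0$. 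Since $\underline u(0,x)=u_{0,\ep}(x)=\ue(0,x)$, comparison gives $\ue(t,x)\ge \underline u(t,x)$ on $[0,\alpha\ep|\ln\ep|]$.

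To finish the lower bound, evaluate at $t=t^\ep=\alpha\ep|\ln\ep|$: for $x$ with $u_{0,\ep}(x)\ge k\ep|\ln\ep|$, with $k$ chosen (as in Theorem \ref{th:gene} $(i)$) so that $u_{0,\ep}(x)-Kt^\ep=u_{0,\ep}(x)-K\alpha\ep|\ln\ep|\ge 3\ep|\ln\ep|$ — i.e. $k:=3+K\alpha$ — the second half of Lemma \ref{LE2} $(v)$ applies with $\xi=u_{0,\ep}(x)-Kt^\ep\ge 3\ep|\ln\ep|$ and $s=t^\ep/\ep=\alpha|\ln\ep|\ge\alpha\ep|\ln\ep|$, giving $w(t^\ep/\ep,\xi)\ge 1-\ep$, hence $\ue(t^\ep,x)\ge 1-\ep$. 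The only genuine point to watch — and the place where Assumption \ref{ass6} is used essentially rather than cosmetically — is that the semiflow $w$ and all estimates of Lemma \ref{LE2} must be available on an interval of initial data containing the range of $u_{0,\ep}$ and of $u_{0,\ep}-Kt$; since $0\le u_{0,\ep}\le M$ and $t\le \alpha\ep|\ln\ep|$ this range is contained in $[-1,M+1]\subset[-M-1,M+1]$ for $\ep$ small, so one simply invokes Lemma \ref{LE2} with $\|g\|_\infty+M$ replaced by $M$ throughout (the proofs in \cite{C1,HKLM} are insensitive to this change). No obstacle beyond this bookkeeping is expected; the lower-order pointwise lower bound $m/(1+\|x\|^n/\ep^n)$ in Assumption \ref{ass6} plays no role in this lemma and is only needed later in the proof of Theorem \ref{THEO6.1}.
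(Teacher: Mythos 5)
Your plan is precisely the argument the paper intends: it omits the proof of Lemma \ref{LE6.2} with the remark that it repeats Theorem \ref{th:gene}, and your reconstruction --- run the super-solution $w(t/\ep,M)$ for part one, and the sub-solution $\max\{0,w(t/\ep,u_{0,\ep}(x)-Kt)\}$ with $g$ replaced by $u_{0,\ep}$ for part two, using the uniform bound $C_0$ on $\|\nabla u_{0,\ep}\|_\infty+\|\Delta u_{0,\ep}\|_\infty$ exactly where Lemma \ref{LE2.1} used the smoothness of $g$ --- is the right one, including the choice $k=3+K\alpha$ feeding into Lemma \ref{LE2} $(v)$. One correction to the justification of the upper bound: the inequality $\bar f_\ep\leq\bar f$ gives $\mathcal L^\ep[\overline u]=\frac1\ep\bigl(\bar f_\ep(\overline u)-\bar f(\overline u)\bigr)\leq 0$, i.e.\ the \emph{wrong} direction for a super-solution; the property actually holds with equality, $\mathcal L^\ep[\overline u]=0$ (as asserted in the proof of Theorem \ref{th:gene} $(ii)$), because the trajectory $s\mapsto w(s,M)$ remains bounded away from $0$, in the region where the cut-off $\psi$ vanishes and hence $\bar f_\ep=\bar f=u(1-u)$.
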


\vsp Define
\begin{equation*}
\xi_\ep:=\ep\left\{ \frac{m}{k\ep|\ln\ep|}-1\right\}^{1/n}\,.
\end{equation*}
In view of Assumption \ref{ass6}, we see that the condition
$u_{0,\ep}(x)\geq k\ep|\ln\ep|$ is satisfied when $\|x\|\leq
\xi_\ep$. Therefore, Lemma \ref{LE6.2} implies that
\begin{equation}\label{estimate}
u^\ep(t^\ep, x)\geq 1-\ep\,,\;\;\text{ for all } x\in\R^N\,\text{
such that } \;\|x\|\leq \xi_\ep\,.
\end{equation}

Now, for each $c>2$, we consider a travelling wave $(c,U)$
solution of \eqref{tw}. Then let us recall that there exist some
constants $0<m_c<M_c$ such that
\begin{equation*}
m_c e^{-\lambda_c z}\leq U(z)\leq M_ce^{-\lambda_c
z}\,,\;\;\forall z\geq 0\,,
\end{equation*}
wherein $\lambda_c>0$ is the smallest root of the equation
\begin{equation}\label{def-lambda}
\lambda^2-c\lambda +1=0\,.
\end{equation}

Then we will show the following lemma.
\begin{lem}[Sub-solutions]\label{lemme}
Let Assumption \ref{ass6} be satisfied. Let $c>2$ be given. Then
there exist some constants $\widetilde{M}>0$, $\widetilde{k}>0$
and $\widehat{k}>0$ such that
\begin{equation*}
u^\ep (t^\ep,x)\geq  \widetilde{M} \exp \left\{-\lambda_c
\frac{\|x\|-\widetilde{k}\ep|\ln\ep|}{\ep}\right\}\,,\;\;\text{ if
}\; \|x\|\geq \widehat{k}\ep|\ln\ep|\,.
\end{equation*}
\end{lem}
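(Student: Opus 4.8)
The plan is to exhibit, for fixed $c>2$, an explicit sub-solution built from the exponential tail of the travelling wave $U$, and then invoke the comparison principle starting from time $t^\ep$. Concretely, I would set $\lambda:=\lambda_c$ (the smallest root of \eqref{def-lambda}, so $\lambda^2-c\lambda+1=0$) and consider a radially symmetric candidate of the form
\[
\underline{v}(t,x):=A\,\exp\left\{-\lambda\,\frac{\|x\|-ct-\widetilde{k}\ep|\ln\ep|}{\ep}\right\},
\]
truncated (i.e. replaced by its minimum with some fixed constant, or simply used only on the region where it is $\leq 1$) so that it stays below $1$ and below $\ue$. The key algebraic point is that for the \emph{planar} profile $z\mapsto e^{-\lambda z}$ one has $\ep\Delta$ acting on $e^{-\lambda(\nu\cdot x)/\ep}$ giving back $\lambda^2\ep^{-1}e^{-\lambda(\nu\cdot x)/\ep}$, and the reaction term $\ep^{-1}u(1-u)$ is $\geq \ep^{-1}u - \ep^{-1}u^2$, so that on the relevant region the operator $\mathcal L^\ep$ applied to the pure exponential is controlled by $\ep^{-1}(\lambda^2 - c\lambda + 1)u$ plus a lower-order error from the curvature of the sphere (the term $\Delta\|x\|=(N-1)/\|x\|$) and from the quadratic term $-\ep^{-1}u^2$, both of which have a favorable sign or are $o(\ep^{-1})$ once $\|x\|\geq \widehat{k}\ep|\ln\ep|$ with $\widehat k$ large. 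Since $\lambda^2-c\lambda+1=0$ exactly, the leading term vanishes and one is left showing the error terms make $\mathcal L^\ep[\underline v]\leq 0$.

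The steps I would carry out, in order, are: (1) fix $c>2$, recall from the excerpt the bound $U(z)\geq m_c e^{-\lambda_c z}$ for $z\geq 0$, and choose the amplitude $A$ and the shift parameter $\widetilde k$ so that $\underline{v}(t^\ep,\cdot)$ lies below $\ue(t^\ep,\cdot)$ on the sphere $\|x\|=\xi_\ep$ — here I use \eqref{estimate}, namely $\ue(t^\ep,x)\geq 1-\ep$ for $\|x\|\leq\xi_\ep$, together with the fact that $\xi_\ep=\mathcal O(\ep|\ln\ep|^{1-1/n}\cdot\text{stuff})$ is of order at most $\ep|\ln\ep|$-ish, so that on $\|x\|=\xi_\ep$ the exponential $\underline v$ is bounded by a fixed constant times $\ep^{\text{power}}$ and in particular $\leq 1-\ep$; (2) check the sub-solution inequality $\mathcal L^\ep[\underline v]\leq 0$ on the region $\{\|x\|\geq \widehat k\,\ep|\ln\ep|\}$ (or $\{\|x\|\geq\xi_\ep\}$) via the computation sketched above, using $\Delta(-\lambda\|x\|/\ep)=-\lambda(N-1)/(\ep\|x\|)$ and absorbing this $o(\ep^{-1})$ term and the $-\ep^{-1}\underline v^2$ term into the vanishing of $\lambda^2-c\lambda+1$; (3) handle the moving inner boundary $\|x\|=ct+\xi_\ep$-type set: since the front of $\underline v$ travels with speed $c$ and $\ue(t,\cdot)$ stays $\geq 1-\ep$ on a growing ball by Theorem \ref{th:gene}-type arguments (or simply because $\underline v\leq 1-\ep$ there by the truncation), the ordering is preserved on the parabolic boundary of the region; (4) apply the comparison principle on the time slab $[t^\ep,\infty)\times\{\|x\|\geq\widehat k\ep|\ln\ep|\}$ to conclude $\ue(t^\ep,x)\geq\underline v(t^\ep,x)=\widetilde M\exp\{-\lambda_c(\|x\|-\widetilde k\ep|\ln\ep|)/\ep\}$, which is exactly the claimed bound (note the lemma is stated at the single time $t=t^\ep$, so in fact only the ordering of initial data plus the sub-solution property at $t=t^\ep$ is needed, making step (3)–(4) essentially a one-time-slice statement).

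The main obstacle I anticipate is \textbf{matching the amplitude on the inner sphere uniformly in $\ep$}: one must choose $\widetilde k$ (the logarithmic shift) large enough that $\widetilde M\exp\{-\lambda_c(\xi_\ep-\widetilde k\ep|\ln\ep|)/\ep\}\leq 1-\ep$ while keeping $\widetilde M$, $\widetilde k$, $\widehat k$ all $\ep$-independent; this works because $\xi_\ep/\ep = \{m/(k\ep|\ln\ep|)-1\}^{1/n}\to\infty$ only polynomially in $1/(\ep|\ln\ep|)$, i.e. $\xi_\ep/\ep = \mathcal O((\ep|\ln\ep|)^{-1/n})$, so $\exp\{-\lambda_c\xi_\ep/\ep\}\to 0$ and the exponential shift $\exp\{\lambda_c\widetilde k|\ln\ep|\}=\ep^{-\lambda_c\widetilde k}$ is a \emph{polynomial} gain in $\ep$ — and a polynomial gain beats a stretched-exponential loss, so a fixed $\widetilde k$ suffices once $\ep$ is small. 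The secondary technical nuisance is the non-self-adjoint geometry (the $(N-1)/\|x\|$ curvature term has the "wrong" sign for a sub-solution supported on an expanding annulus), but since this term is $\mathcal O(1/(\widehat k |\ln\ep|))\cdot \ep^{-1}\underline v = o(\ep^{-1}\underline v)$ on $\|x\|\geq\widehat k\ep|\ln\ep|$, it is dominated by the strict inequality coming from $-\ep^{-1}\underline v^2<0$ and, if needed, by perturbing $\lambda$ slightly below $\lambda_c$ so that $\lambda^2-c\lambda+1>0$ is a genuine negative contribution — I would in fact use $\lambda=\lambda_c$ and rely on the quadratic term, which is cleaner.
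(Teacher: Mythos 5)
There are two genuine gaps here, and they are exactly the two points on which the paper's proof turns.

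First, the computational core of your sub-solution fails because of a sign error. Writing $\underline v=A\exp\{-\lambda(\|x\|-ct-\sigma)/\ep\}$ one gets
\begin{equation*}
\mathcal L^\ep[\underline v]
=\frac{\underline v}{\ep}\Bigl[-(\lambda^2-c\lambda+1)+\underline v\Bigr]
+\lambda\,\frac{N-1}{\|x\|}\,\underline v ,
\end{equation*}
because the reaction enters $\mathcal L^\ep$ as $-\eun\underline v(1-\underline v)=-\eun\underline v+\eun\underline v^{\,2}$: the quadratic term contributes $+\eun\underline v^{\,2}>0$, not $-\eun\underline v^{\,2}<0$ as you assert. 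With $\lambda=\lambda_c$ the leading bracket vanishes and you are left with $\mathcal L^\ep[\underline v]=\eun\underline v^{\,2}+\lambda_c(N-1)\|x\|^{-1}\underline v>0$; both correction terms have the unfavorable sign, so your candidate is strictly \emph{not} a sub-solution anywhere on its exponential part. Your parenthetical fallback (take $\lambda$ slightly below $\lambda_c$, so that $-(\lambda^2-c\lambda+1)<0$ absorbs both bad terms where $\underline v$ is small) is the correct repair — and since $\lambda<\lambda_c$ gives a \emph{slower} decaying, hence stronger, lower bound it still yields the stated conclusion — but you explicitly decline to use it. The paper's version of this repair is to keep the exact wave profile $U$ (so that the nonlinear term cancels identically through the ODE \eqref{tw}) but translate it at a reduced speed $c_1<c$: then $\ep\mathcal L^\ep[W]=U'\cdot\bigl(c-c_1-\ep(N-1)/\|x\|\bigr)\le 0$ once $\rho\ge (N-1)/(c-c_1)$, the speed deficit $c-c_1>0$ playing the role your (nonexistent) negative quadratic term was supposed to play.

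Second, your comparison argument is circular and never controls the far field. You propose to compare on $[t^\ep,\infty)$; but the comparison principle on that slab can only produce information at times $>t^\ep$, whereas the lemma is a statement \emph{at} $t=t^\ep$ — as you yourself observe, all that remains is to verify the ordering of the "initial" data at $t=t^\ep$, which is precisely the lemma. Estimate \eqref{estimate} only gives $\ue(t^\ep,x)\ge 1-\ep$ for $\|x\|\le\xi_\ep$, and nothing below $\ue\ge 0$ for $\|x\|>\xi_\ep$; no comparison started at $t^\ep$ can manufacture the required exponential lower bound there (diffusion over a time $t^\ep=\alpha\ep|\ln\ep|$ only carries the mass of the ball $\{\|x\|\le\xi_\ep\}$ an extra distance of order $\ep|\ln\ep|$, which is negligible since the exponential tail must hold for all $\|x\|$). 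Tellingly, your argument never uses the hypothesis $u_{0,\ep}(x)\ge m/(1+\|x\|^n/\ep^n)$ of Assumption \ref{ass6}, without which the lemma is false. The paper's proof is forced to start the comparison at $t=0$: the parameter $\rho$ is chosen so that $W(0,x)\le m/(1+\|x\|^n/\ep^n)\le u_{0,\ep}(x)$ for every $x\in\R^N$ (here $\rho\ge n/\lambda_c$ guarantees the algebraic tail dominates the exponential tail of $U$), the comparison is applied on all of $(0,\infty)\times\R^N$, and the conclusion is read off at $t=t^\ep$, with $\widetilde M=m_c$, $\widetilde k=c_1\alpha$, $\widehat k=2c_1\alpha$. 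Your observation that a polynomial gain $\ep^{-\lambda_c\widetilde k}$ beats the stretched-exponential factor $e^{-\lambda_c\xi_\ep/\ep}$ is correct but addresses only the sphere $\|x\|=\xi_\ep$, which is not where the difficulty lies.
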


\begin{proof}
Let $(c,U)$ be a given travelling wave associated with the given
speed $c>2$. Let $c_1\in(0,c)$ be given and fixed. Let $\rho>0$ be
chosen large enough such that
\begin{equation*}
\begin{split}
&\rho\geq \max\left(\frac{N-1}{c-c_1},\frac n {\lambda _c}\right)\vsp\\
&\frac{m}{1+\rho^n}\geq M_c e^{-\lambda_c\rho}\,.
\end{split}
\end{equation*}
Then we consider the map $v_0=v_0(s)$ defined by
\begin{equation*}
v_0(s):=\begin{cases} U(\rho)&\quad\text{ if }\,|s|\leq\rho\\
U(|s|)&\quad\text{ if }\,|s|\geq \rho\,,\end{cases}
\end{equation*}
and define the function $W(t,x)$ by
\begin{equation*}
W(t,x):=v_0\left(\frac{\|x\|-c_1t}{\ep}\right).
\end{equation*}

Note that due to the choice of $\rho$ and the definition of $v_0$
we have
\begin{equation*}
\frac{m}{1+\frac{\|x\|^n}{\ep^n}}\geq W(0,x)\,,\;\;\text{ if }\;
\|x\|\leq \ep\rho\,.
\end{equation*}
Moreover, since $\rho\geq n/\lambda _c$, we see that
\begin{equation*}
\frac{m}{1+\frac{\|x\|^n}{\ep^n}}\geq M_c
e^{-\lambda_c\frac{\|x\|}{\ep}}\,,\;\;\text{ if }\; \|x\|\geq
\ep\rho\,.
\end{equation*}
Then we get that
\begin{equation*}
 u_{0,\ep}(x)\geq \frac{m}{1+\frac{\| x\| ^n}{\ep^n}}\geq W(0,x)\,,\;\;\forall x\in\R^N\,.
\end{equation*}

On the other hand straightforward computations yield
\begin{equation*}
\mathcal L^\ep [W](t,x)=\begin{cases}
 -\frac{1}{\ep}U(\rho)(1-U(\rho))&\quad\text{ if }\,\left|\|x\|-c_1t\right|\leq \ep\rho\vsp\\
 \frac{1}{\ep} U'\left(\frac{\|x\|-c_1t}{\ep}\right)
\left(c-c_1+\ep\frac{N-1}{\|x\|}\right)&\quad\text{ if
}\,\left|\|x\|-c_1t\right|\geq\ep\rho\,.\end{cases}
\end{equation*}
Thus $\mathcal L^\ep [W](t,x)\leq 0$ for all $(t,x)\in
(0,\infty)\times \R^N$.

From the comparison principle, we then deduce that
\begin{equation*}
W(t,x)\leq u^\ep(t,x)\,,
\end{equation*}
for all $(t,x)\in (0,\infty)\times \R^N$. As a consequence we
obtain that
\begin{equation*}
u^\ep (t^\ep,x)\geq m_c e^{-\lambda_c \frac{\|x\|-c_1
t^\ep}{\ep}}\,,\;\;\text{ if }\, \|x\|\geq c_1t^\ep+\ep\rho\,.
\end{equation*}
Using this, one then easily proves that the choices $\widetilde
M=m_c$, $\widetilde k=c_1 \alpha$ and $\widehat k=2c_1\alpha$ are
enough to conclude.

Lemma \ref{lemme} is proved.
\end{proof}

We are now in the position to prove Theorem \ref{THEO6.1}.

\begin{proof} Let $(t_0,x_0)\in (0,\infty)\times\R^N$ be given. Let $c>\max(\frac{\|x_0\|}{t_0},2)$ be given.
Fix $c_1\in(0,c)$ such that
\begin{equation*}
\|x_0\|-c_1 t_0<0\,.
\end{equation*}
Let $\widehat{\ep}\in (0,1)$ be given and fixed.  We choose
$\rho>0$ large enough such that
\begin{equation*}
c_1+\frac{N-1}{\rho}\leq c\,.
\end{equation*}
Let $U$ be the travelling wave solution of \eqref{tw} associated
with the wave speed $c$ such that
\begin{equation}\label{widehatep}
U(0)= 1-\widehat{\ep}\,.
\end{equation}
Next, for each $\rho >0$, we consider the map
\begin{equation*}
q (s):=\begin{cases}  U(0) &\quad\text{ if }\,s\leq \rho\\
U(s-\rho)&\quad\text{ if }\,s\geq \rho\,,
\end{cases}
\end{equation*}
and define the function $\widetilde W (t,x)$ by
\begin{equation*}
\widetilde{W}(t,x):=q\left(\frac{\|x\|-c_1t}{\ep}\right)\,.
\end{equation*}

Using similar arguments to those used in the proof of Lemma
\ref{lemme}, we see that
\begin{equation*}
\mathcal L^\ep [\widetilde{W}]\leq 0\quad\text{ in }\mathbb
(0,\infty)\times \R ^N\,.
\end{equation*}

Next, we prove below that there exists $\ep_1\in(0,\widehat{\ep})$
such that, for all $\ep\in (0,\ep_1)$,
\begin{equation}\label{qqch}
\widetilde{W}(0,x)\leq u^\ep(t^\ep,x)\,,\;\;\forall x\in\R^N\,.
\end{equation}
Indeed, it directly follows from \eqref{estimate} and
\eqref{widehatep} that \eqref{qqch} holds true if $\|x\|\leq
\xi_\ep$ and $\ep\leq \widehat{\ep}$. Let us now assume that
$\|x\|\geq \xi_\ep$. Note that, for $\ep >0$ small enough,
\begin{equation*}
\ep\rho\leq \xi_\ep\,,\;\;\widehat{k}\ep|\ln\ep|\leq \xi_\ep\,.
\end{equation*}
Therefore, we have
$$
\widetilde{W}(0,x)\leq M_c
e^{-\lambda_c\frac{\|x\|-\rho\ep}{\ep}}\,.
$$
Since, by Lemma \ref{lemme}, we have
$$
u(t^\ep,x)\geq \widetilde{M} e^{-\lambda_c
\frac{\|x\|-\widetilde{k}\ep|\ln\ep|}{\ep}}\,,
$$
it follows that \eqref{qqch} holds true as well in the case
$\|x\|\geq \xi _\ep$, for $\ep>0$ small enough.

The comparison principle then applies and yields, for $\ep >0$
small enough,
\begin{equation*}
\widetilde{W}(t-t^\ep,x)\leq u^\ep(t,x)\,,\;\;\forall (t,x)\in
[t^\ep ,\infty)\times\R^N\,.
\end{equation*}

Since $\|x_0\|-c_1t_0<0$ and $\displaystyle \lim_{\ep\to 0^+}
\rho\ep-c_1 t^\ep=0$, we see that, for $\ep >0$ small enough,
$$
\frac{\|x_0\|-c_1(t_0-t^\ep)}\ep \leq \rho\,,
$$
which in turn implies that
\begin{equation*}
 u^\ep(t_0,x_0)\geq \widetilde{W}(t_0-t^\ep,x_0)= U(0)\,.
\end{equation*}
Thus we get
\begin{equation*}
U(0)\leq \liminf_{\ep\to 0} u^\ep(t_0,x_0)\,.
\end{equation*}
Since $U(0)=1-\widehat \ep$, with $\widehat{\ep}>0$ arbitrary
small, we obtain
\begin{equation*}
1\leq \liminf_{\ep\to 0} u^\ep(t_0,x_0)\,.
\end{equation*}
Finally, due to the first part of Lemma \ref{LE6.2}, we get that
\begin{equation*}
\limsup_{\ep\to 0} u^\ep(t_0,x_0)\leq 1\,,
\end{equation*}
which completes the proof of Theorem \ref{THEO6.1}.

\end{proof}


\begin{thebibliography}{ABCD}



\bibitem{A-Hil-Mat} M. Alfaro, D. Hilhorst and H. Matano,
{\it The singular limit of the Allen-Cahn equation and the
FitzHugh-Nagumo system}, J. Differential Equations {\bf 245}
(2008), 505--565.

\bibitem{Aro-Wei1} D. G. Aronson and H. F. Weinberger, {\it Nonlinear diffusion in
population genetics, combustion, and nerve pulse propagation},
Partial differential equations and related topics (Program,
 Tulane Univ., New Orleans, La., 1974), 5--49. Lecture Notes in
 Math., Vol. 446, Springer, Berlin, 1975.

\bibitem{AW} D. G. Aronson and H. F. Weinberger, {\it Multidimensional nonlinear
diffusions arising in population genetics}, Adv. Math. {\bf 30}
(1978), 33-76.

\bibitem{Bar-Eva-Sou} G.~Barles, L. C. Evans and P.~E.~Souganidis,
{\it Wavefront propagation for reaction-diffusion systems of PDE},
Duke Math. J.  {\bf 61}  (1990),  no. 3, 835--858.

\bibitem{Bar-Soug2} G.~Barles and P.~E.~Souganidis, {\it A remark on the asymptotic behavior of the solution of
the KPP equation}, C. R. Acad. Sci. Paris S\'erie I {\bf 319}
(1994), 679--684.

\bibitem{BHL} H. Berestycki, F. Hamel and L. Roques, {\it \'Equations de r\'eaction-diffusion et mod\`eles
d'invasions biologiques dans les milieux p\'eriodiques}, C.R.
Acad. Sci. Paris {\bf 33} (2009), 549--554.

\bibitem{C1} X.~Chen,
{\it Generation and propagation of interfaces for
reaction-diffusion equations}, J.~Differential equations {\bf 96}
(1992), 116--141.


\bibitem{Ev-Soug} L. C. Evans and P. E. Souganidis,
{\it A PDE approach to geometric optics for certain semilinear
parabolic equations}, Indiana Univ. Math. J. {\bf 38} (1989),
141--172.


\bibitem{Fish} R. A. Fisher, {\it The wave of advance of advantageous genes},
Ann. of Eugenics {\bf 7} (1937), 355--369.

\bibitem{Frie} M. I. Freidlin, {\it Limit theorems for large deviations and reaction-diffusion equations},
Ann. Probab. {\bf 13} (1985), 639--675.

\bibitem{HKLM} D. Hilhorst, R. Kersner, E. Logak and M. Mimura,
{\it Interface dynamics of the Fisher equation with degenerate
diffusion}, J. Differential Equations {\bf 244} (2008),
2872--2889.

\bibitem{Kol-Pet-Pis} A. N. Kolmogorov, I. G. Petrovsky and N. S. Piskunov, {\it Etude de
l'\'equation de la diffusion avec croissance de la quantit\'e de
mati\`ere et son application \`a un probl\`eme biologique},
Bulletin Universit\'e d'Etat  Moscou, Bjul. Moskowskogo Gos.
Univ., 1937, 1--26.


\bibitem{MPV} H. Malchow, S.V. Petrovskii and E. Venturino, {\it
Spatiotemporal Patterns in Ecology and Epidemiology. Theory,
Models, and Simulations}, Mathematical and Computational Biology
Series, Chapman $\&$ Hall/CRC Press, Boca Raton, FL, 2008.

\bibitem{PM} S.V. Petrovskii and H. Malchow, eds. (2005), {\it Biological Invasions
in a Mathematical Perspective}, (A special issue of Biological
Invasions: Proceedings of Computational and Mathematical
Population Dynamics, Trento, June 21-25, 2004),  Springer,
Dordrecht, 128 p.

\bibitem{Shi-Kaw} N. Shigesada and K. Kawasaki, {\it Biological
Invasion: Theory and Practise}, Oxford University Press, 1997.


\bibitem{Volpert-Volpert-Volpert} A.\ Volpert, V. Volpert, V. Volpert,
{\it Travelling Wave Solutions of Parabolic Systems}, Translations
of Mathematical Monographs, vol.\ 140, AMS\ Providence, RI, 1994.\



\end{thebibliography}
\end{document}